\numberwithin{equation}{section}
\def\XXint#1#2#3{{\setbox0=\hbox{$#1{#2#3}{\int}$ }
\vcenter{\hbox{$#2#3$ }}\kern-.6\wd0}}
\newtheorem{theorem}{Theorem}[section]
\newtheorem{corollary}{Corollary}[theorem]
\newtheorem{lemma}[theorem]{Lemma}
\newtheorem{conjecture}[theorem]{Conjecture}
\theoremstyle{definition}
\newtheorem{definition}{Definition}[section]
\newtheorem{remark}[theorem]{Remark}
\theoremstyle{proposition}
\newtheorem{proposition}{Proposition}[section]
\begin{document}

\title[Improvement on the Circle and Divisor Problems]
{An improvement on Gauss's Circle Problem and Dirichlet's Divisor Problem}
  
\author{Xiaochun Li}

\address{Xiaochun Li\\
Department of Mathematics\\
University of Illinois at Urbana-Champaign\\
Urbana, IL, 61801, USA}

\email{xcli@illinois.edu}

\author{Xuerui Yang}

\address{ Xuerui Yang\\
Department of Mathematics\\
University of Illinois at Urbana-Champaign\\
Urbana, IL, 61801, USA}

\email{xueruiy3@illinois.edu}

\date{}

\begin{abstract}
Using Bombieri-Iwaniec method, we establish an improvement on both Gauss's Circle Problem and Dirichlet's Divisor Problem. More precisely, we derive a new estimate for the first spacing problem and combine it with Huxley's results on the second spacing problem.
\end{abstract}

\maketitle

\section{Introduction}  

In the realm of number theory, certain fundamental problems have persisted as intricate challenges, captivating the minds of mathematicians for generations. Two such enigmas are Gauss's Circle Problem and the Dirichlet's Divisor Problem, which seek to uncover the distribution of integral points within circles and the distribution of divisors of integers, respectively. 


Let 
\begin{equation}  \label{circle problem}
R(X)=\sum_{m^2+n^2\le X} 1-\pi X    
\end{equation}
and 
\begin{equation}  \label{divisor problem}
\Delta(X)=\sum_{1\le n\le X} d(n) - X\log X -(2\gamma-1)X.
\end{equation}
Here $d(\cdot)$ is the divisor function
\[
d(n)=\sum_{s\mid n} 1, 
\]
and $\gamma=0.5772\dots$ is Euler's constant. Gauss's Circle Problem and the Dirichlet's Divisor Problem seek for upper bounds of the following form:
\[
R(X)\lesssim_\epsilon X^{\theta+\epsilon}, 
\]
and 
\[
\Delta(X)\lesssim_\epsilon X^{\theta+\epsilon},
\]
with $\theta$ as small as possible. 

For a detailed account of the history and background of these two problems, we refer the readers to the comprehensive survey paper by B. C. Berndt, S. Kim, and A. Zaharescu \cite{Bruce}.  For both problems, attempts on the opposite direction
were made in a series of papers \cite{Hardy1},\cite{Hardy2} and \cite{Hardy3} by G. H. Hardy, who proved that both $R(X)$ and $\Delta(X)$ are 
\begin{equation}\label{Ome-1}
\Omega ((X\log X)^{\frac{1}{4}}),
\end{equation}
where $f=\Omega(g)$ means that for any constant $C>0$, $|f|\ge C |g|$ infinitely often in the limit process. Since then, no one has been able to enlarge the exponent $\frac{1}{4}$ on $X$ in  (\ref{Ome-1}).  That naturally gives rise to the following conjecture:
\begin{conjecture}   \label{conj-1}
In Gauss's Circle Problem and Dirichlet's Divisor Problem, 
\[
\theta=\frac{1}{4}. 
\]
\end{conjecture}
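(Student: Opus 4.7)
Because Conjecture \ref{conj-1} is stated as a conjecture and not a theorem, a genuine proof is beyond the currently known methods; what follows is a plan one would pursue, together with an honest account of the obstacle. The heuristic basis for $\theta=\frac14$ comes from matching the $\Omega$-result of Hardy in (\ref{Ome-1}): assuming essentially square-root cancellation in the error terms is optimal, the exponent $\frac14$ should be attainable up to factors of $X^\epsilon$. The starting point is the classical Voronoi summation formula, which expresses both error terms as truncated weighted exponential sums of the form
\[
R(X),\ \Delta(X)\ \approx\ \frac{X^{1/4}}{\pi}\sum_{n\le N}\frac{a_n}{n^{3/4}}\cos\bigl(4\pi\sqrt{nX}+\phi\bigr)+O(X^\epsilon),
\]
with $a_n=r_2(n)$ or $a_n=d(n)$ respectively, truncation parameter $N\asymp X$, and a suitable constant phase shift $\phi$. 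The arithmetic features of $r_2$ and $d$ are, at this stage, interchangeable, so the two problems are handled in parallel.

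The plan is then to execute the Bombieri-Iwaniec machinery alluded to in the abstract. First I would perform a dyadic decomposition $n\asymp M$ and split the resulting sum into short blocks of length $\approx M^{1/2}$, on which the phase $4\pi\sqrt{nX}$ can be linearised. A Farey-type dissection of the dual variable (after Poisson summation on the short blocks) then reduces the estimation to two combinatorial-geometric questions: the \emph{first spacing problem}, concerning the distribution of close pairs of rationals approximating the stationary points of the phase, and the \emph{second spacing problem}, concerning the joint distribution over many blocks. A sharp bound for the first spacing problem, combined with Huxley's treatment of the second, feeds into a large-sieve or mean-value inequality, which in turn bounds the exponential sum by a power of $X$ strictly smaller than the trivial $X^{1/2}$.

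The main obstacle is an essential one. Bombieri-Iwaniec, even augmented by the refinements of Huxley, Watt, Sargos, and others, appears structurally incapable of reaching $\theta=\frac14$: the losses incurred in the dissection and in the application of the large sieve to the spacing data prevent full square-root cancellation, and the current world records for both problems sit well above $\frac14$. To resolve Conjecture \ref{conj-1} one would presumably need either a spectral identity of Motohashi type that encodes the square-root cancellation intrinsically, or an entirely new analytic input; neither is presently available. Consequently I expect the present paper's contribution to be a quantitative improvement on $\theta$ via a new estimate for the first spacing problem, rather than a proof of Conjecture \ref{conj-1} itself, which remains open.
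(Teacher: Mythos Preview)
Your assessment is correct: Conjecture \ref{conj-1} is stated as an open conjecture and the paper does not attempt to prove it. Your diagnosis of the obstruction aligns closely with the paper's own discussion in the introduction, which explicitly notes that the Bombieri--Iwaniec method loses the full orthogonality between short sums when the long sum is dissected, that the large sieve only partially recovers this cancellation, and that consequently $\theta=\tfrac14$ is unreachable by existing methods (indeed the paper singles out $\theta=\tfrac{5}{16}$ as an apparent barrier). Your expectation that the paper's actual contribution is a quantitative improvement on $\theta$ via a new first spacing estimate is exactly right: the paper proves $\theta^*\approx 0.3145$ by combining a small cap decoupling input into the first spacing problem with Huxley's second spacing estimates.
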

More recently, the pursuit of these two famous problems has been invigorated by the Bombieri-Iwaniec method, which was initiated in \cite{BombieriIwaniec} to study the pointwise bound on $|\zeta(\frac{1}{2}+it)|$, where $\zeta(s)$ is the Riemann zeta function. Then it was adapted by H. Iwaniec and C. J. Mozzochi \cite{IwaniecMozzochi} to study Gauss's Circle Problem and Dirichlet's Divisor Problem. 
Later on, M. N. Huxley refined and generalized the method in a series of papers \cite{HuxleyZeta1},\cite{HuxleyZeta4},\cite{HuxleyZeta5},\cite{HuxleyCircle1},\cite{Huxley03} and \cite{HuxleyRC}, addressing the pointwise estimate of $|\zeta(\frac{1}{2}+it)|$ and the Circle and Divisor Problems. Before our work, the best-known upper bound for $\theta$ was established by Huxley \cite{Huxley03} at $\theta=\frac{131}{416}\approx 0.3149\dots$. 
The current record for an upper bound on $|\zeta(\frac{1}{2}+it)|$ was established by J. Bourgain \cite{BourgainZeta}, who proved that 
\[
\Big|\zeta(\frac{1}{2}+it)\Big|\lesssim_\epsilon |t|^{\frac{13}{84}+\epsilon}.
\]
As a novel observation in \cite{BourgainZeta}, it was shown that the decoupling theory can be used to handle certain mean values of
exponential sums arising in the pointwise estimates of the zeta function. 
Using this insight, Bourgain and N. Watt in \cite{BourgainWatt1st} were able to improve bounds for the mean square of $\big|\zeta(\frac{1}{2}+it)\big|$ on short intervals. \\

Combining those aforementioned methods, in this paper, we are able to strengthen Huxley's result as follows. 

\begin{theorem}  \label{theorem in introduction}
\[
R(X)=O_\epsilon (X^{\theta^*+\epsilon}) \quad \text{ and} \quad \Delta(X)= O_\epsilon (X^{\theta^*+\epsilon})
\]
for all $\epsilon>0$, where 
\[
\theta^*= 0.314483\cdots
\]
is defined below.  
\begin{definition}  \label{theta} 
$\theta^*= 0.3144831759741\cdots$ is defined in such a way that $-\theta^*$ is unique solution to the equation
\begin{equation}  \label{definition of theta}
-\frac{8}{25}x-\frac{1}{200}\Big(\sqrt{2(1-14x)}-5\sqrt{-1-8x}\Big)^2+\frac{51}{200}=-x    
\end{equation}
on the interval $[-0.35,-0.3]$.

\begin{tikzpicture}
\begin{axis}[
    xlabel=$x$,
    ylabel=$y$,
    width=\textwidth, 
    xmin=-0.38, xmax=-0.3, 
    ymin=0.25, ymax=0.4, 
    axis lines=middle,
    grid=both,
    minor tick num=1,
    mark size=3pt,
    domain=-0.38:-0.3, 
    samples=100,
    legend style={at={(0.05,0.95)},anchor=north west},
]
\addplot[blue,smooth] {-x};
\addplot[red,smooth] {-(8/25)*x - (1/200)*((sqrt(2*(1-14*x))-5*sqrt(-1-8*x))^2) + (51/200)};

\coordinate (intersection) at (-0.35, 0.35);
\draw[fill=black] (intersection) circle (2pt) node[above right] {Intersection};

\node[blue] at (axis cs: -0.36, 0.375) {$g(x)=-x$};
\node[red] at (axis cs: -0.34, 0.295) {$f(x)=-\frac{8}{25}x-\frac{1}{200}\Big(\sqrt{2(1-14x)}-5\sqrt{-1-8x}\Big)^2+\frac{51}{200}$};

\end{axis}
\end{tikzpicture}
\end{definition}
\end{theorem}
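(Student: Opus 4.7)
The plan is to work within the Bombieri-Iwaniec framework as adapted to the lattice-point problems by Iwaniec-Mozzochi and refined by Huxley. Standard reductions (truncated Voronoi summation, equivalently Poisson summation combined with van der Corput's $B$-process) convert the problem of bounding $R(X)$ and $\Delta(X)$ into the problem of bounding an exponential sum
\[
S \;=\; \sum_{M < m \le M'} e\bigl(T\, f(m/M)\bigr),
\]
where $f$ is a fixed smooth phase with nonvanishing second derivative, $T \asymp X^{1/2}$, and the critical range is $M \asymp T^{1/2}$. A bound $|S| \lesssim_\varepsilon M^{1-\lambda+\varepsilon}$ for $\lambda$ as large as possible translates, through the standard conversion, into the exponent $\theta^*$ in the theorem.

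First I would carry out Huxley's fine dissection of $S$: partition the range of $m$ into short blocks on which $T f'(m/M)$ stays in a Farey neighborhood of some rational $a/q$ with $q \asymp Q$, where $Q$ is an auxiliary parameter. Applying stationary phase on each block converts the local sum into an \emph{inflated arc} indexed by quadruples $(h,k,a,q)$ subject to Diophantine compatibility constraints; squaring and applying the large sieve then reduces the task to two combinatorial quantities, classically called the first and second spacing problems. For the second spacing problem I would invoke Huxley's estimate from \cite{Huxley03} unchanged. For the first spacing problem, I would replace Huxley's bound with a new estimate derived from the $\ell^2$-decoupling philosophy used by Bourgain \cite{BourgainZeta} and Bourgain-Watt \cite{BourgainWatt1st}: after reducing to a count of quadruples of Farey fractions in a short window, the count is recast as a moment of an exponential sum along a curved frequency set and controlled by decoupling for that curve.

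Combining the new first spacing estimate with Huxley's second spacing estimate and optimizing over $Q$ and the block length yields an inequality whose equality case is precisely the fixed-point equation $(\ref{definition of theta})$ once one substitutes $x = -\theta$; the two square roots in the formula arise from balancing the two contributions via a Hölder-type split at the optimal exponents. The main obstacle will be the first spacing estimate itself: quantifying the decoupling gain in a form compatible with the Bombieri-Iwaniec resonance decomposition, and tracking the many parameters so that the final optimization reproduces exactly the expression in $(\ref{definition of theta})$ rather than a weaker bound. A secondary technical issue is identifying the correct curved frequency set, as the relevant Diophantine structure differs in subtle ways from the mean-square moment problems treated in \cite{BourgainWatt1st}; once this first spacing bound is in place, the remainder reduces to bookkeeping within Huxley's machinery and numerical verification that the unique root of $(\ref{definition of theta})$ on $[-0.35,-0.3]$ is indeed $-\theta^* = -0.3144831759\ldots$.
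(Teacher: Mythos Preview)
Your high-level plan matches the paper's strategy---Bombieri--Iwaniec framework, Huxley's second spacing estimate unchanged, a new first spacing bound via decoupling, and a final optimization that produces the fixed-point equation~\eqref{definition of theta}. However, several concrete pieces are either wrong or too vague to carry through.

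First, the reduction you sketch is for the zeta function, not for the circle and divisor problems. The relevant exponential sum here is the \emph{double} sum
\[
S=\sum_{h\asymp H}\sum_{m\asymp M} e\Bigl(\frac{hT}{M}F\bigl(\tfrac{m}{M}\bigr)\Bigr),\qquad T=X,\quad 1\le H\le MT^{-\theta^*},\quad M\le T^{1/2},
\]
not a single sum with $T\asymp X^{1/2}$. The extra $h$-variable is essential: after the Bombieri--Iwaniec transformation the inner sum is over pairs $(k,l)$ with phase $\vec{x}_{a/r}\cdot(l,kl,l\sqrt{k},l/\sqrt{k})$, and the first spacing problem becomes a bound on
\[
G_q=\Bigl\|\sum_{k\sim K}\sum_{l\sim L}a_{kl}\,e\bigl(lx_1+klx_2+l\sqrt{k}\,x_3\bigr)\Bigr\|_{L^q_\#},
\]
a two-parameter mean value, not a count of Farey quadruples.

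Second, the decoupling input you name is not the right one. The key observation is that after an affine change of variables the frequency set $(l/L,\,kl/KL,\,l\sqrt{k}/L\sqrt{K})$ lies on the \emph{truncated cone} $\xi_1^2+\xi_2^2=\xi_3^2$ in $\mathbb{R}^3$, and the paper applies the recent \emph{small cap} decoupling theorem of Guth--Maldague for the cone, not Bourgain's curve decoupling from~\cite{BourgainZeta}. Standard $\ell^2$ decoupling would not give the needed gain; the small-cap exponents $\beta_1,\beta_2$ and the specific form of the decoupling constant are what make the optimization work.

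Third, the optimization is not over $Q$ and block length alone: the crucial free parameter is the H\"older exponent $q\in[4,4.5]$ in $G_q$, following Bourgain--Watt's observation that the large sieve can be run at a variable exponent. The paper chooses $q=q_x$ as an explicit function of $x$ where $H=MT^{-x}$, and it is this choice that collapses the bound to the expression in~\eqref{definition of theta}. Without identifying and varying $q$, you will not reproduce the stated exponent.
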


Let us outline some of ideas used in our proof.  Starting with the Bombieri-Iwaniec method, we dissect a long exponential sum $S$
such as 
\begin{equation}\label{zeta1}
\sum_{m\sim M} e(T\log m) \,,\quad \quad \text{where } M\ll T,    
\end{equation}
and 
\begin{equation} \label{c-d}
\sum_{h\sim H}\sum_{m\sim M} e\Big(\frac{Th}{m} \Big) \,,\quad \text{where }  H\ll M\ll T,    
\end{equation}
into an addition of short sums $\sum_j S_j$. A sum of the form (\ref{zeta1}) arises in the study of 
$\Big|\zeta(\frac{1}{2}+it)\Big|$, and (\ref{c-d}) is the typical exponential sum encountered in both the Circle and Divisor Problems. 
For each short sum $S_j$, a Taylor expansion is employed to replace intractable phase functions, like $\log m$ and $\frac{T}{m}$, 
by cubic or quadratic polynomials in $m$. In other words, each short exponential sum can be transformed into a  standard Weyl sum, in which the coefficients are very close to
nice rational numbers via a use of the Dirichlet approximation theorem, a standard step in Hardy-Littlewood's circle method. 
For each short Weyl sum, we apply the Poisson summation formula, where the power of the Fourier transform is taken into account. Because of the rational approximations of the coefficients, the Poisson summation can be carried out in a neater way, and the phase functions in the consequent exponential sum will be more structured.  In Gauss's Circle Problem and Dirichlet's Divisor Problem, 
usually only the linear term coefficient is approximated by a rational number, so that the way to decompose $S$ into short sums $S_j$'s is uniquely determined by the rational approximation.  Those short sums $S_j$'s can be classified 
according to the size of the denominator $r$ of the rational approximation $a/r$ to the linear coefficient, corresponding to
major arcs and minor arcs in the Hardy-Littlewood circle method. The Poisson summation and the rational approximation lead our problem to the following exponential sum 
\begin{equation}\label{inner}
\sum_{\frac{a}{r}} \Big| \sum_{(k,l)} e\Big(\Vec{x}_{\frac{a}{r}}\cdot \Vec{y}_{(k,l)}\Big) \Big|,
\end{equation}
where $\Vec{x}_{\frac{a}{r}}$ is a vector with entries related to the rational number $\frac{a}{r}$ from the minor arcs, and
\[
\Vec{y}_{(k,l)}=\Big(l,kl,l\sqrt{k},\frac{l}{\sqrt{k}}\Big).
\]
To handle (\ref{inner}), the standard way is to apply the double large sieve inequality as Bombieri and Iwaniec did. 
After that, we run into Huxley-type first and second spacing problems. The first spacing problem involves estimating on the mean value of exponential sums associated to the vector $\Big(l,kl,l\sqrt{k},\frac{l}{\sqrt{k}}\Big)$.  When the exponent is an even integer, the first spacing problem is equivalent to counting solutions of a certain Diophantine system. However, it was observed by Bourgain  that it is indeed a decoupling problem, deeply connected with modern Fourier analysis.  
This gives a new perspective to the first spacing problem. 
The second spacing problem, roughly speaking,  is related to the distribution of integer points near a $C^3$-curve. Huxley gave some nice estimations on it (see \cite{Huxley03}).   \\

Our endeavor to the first spacing problem is fueled by the recent advancements in the field of Decoupling Theory, particularly the progress on small cap decoupling for the cone made by Guth and Maldague \cite{GuthMaldague}. This will be illustrated in Section \ref{first spacing problem}. Combining our efforts in the first spacing problem with Huxley's second spacing estimates  in \cite{Huxley1996}, \cite{Huxley03}, we can achieve an improvement. This will be the subject of Section \ref{bounds on exponential sum}. In Section \ref{final argument}, we establish Theorem \ref{theorem in introduction} using results from previous sections.\\

It seems that $\theta=\frac{5}{16}=0.3125$ is an unbeatable barrier of the existing methods.  
It will be exciting if the obstacle $\theta=0.3125$ can be overcome.  Conjecture \ref{conj-1} is extremely challenging. Although it is effective to provide non-trivial decay estimates, the Bombieri-Iwaniec method creates an issue in the beginning when the long exponential sum is broken into short sums. The orthogonality between those short sums is retrieved partially through the large sieve method. However, we do not understand how to quantitatively capture the complete cancellation from those short sums. This makes it impossible to reach $\theta=1/4$  as conjectured.  Some new ideas must be needed if one wants to resolve the conjecture. \\

{\bf Acknowledgement} The authors thank Bruce C. Berndt for a careful reading of an earlier version of this paper and for some helpful suggestions that have increased readability of the paper.  

The first author is supported by a Simons fellowship 2019-2020 and Simons collaboration grants.

\section{Notations} \label{notations}

$A\lesssim B$ (or $A=O(B)$) means $A \le C B$,  where $C$ is some positive constant, and $A\lesssim_\epsilon B$ (or $A=O_\epsilon(B)$) indicates that the implicit constant may depend on the subscript $\epsilon$.  

$A\sim B$ denotes that we have both $A\lesssim B$ and $B\lesssim A$. 

$A\asymp B$ represents that we have $B\le A\le 2B$. 

In this paper, $\ll,\gg$ are not Vinogradov's notation.  Instead, $A\ll B$ (or $A\gg B$) suggests that $A$ is much smaller (or larger) than $B$. 

$\|x\|$ is the distance between $x$ and the nearest integer to $x$. 

For $e(x):=e^{2\pi i x}$, the Fourier transform and the inverse Fourier transform are defined as 
\[
\begin{split}
\widehat{f}(\xi)& =\int f(x)e(-x\xi) dx, 
\\ \widecheck {g}(x) &= \int g(\xi) e(x\xi) d\xi. 
\end{split}
\]

$\frac{a}{r}$ is always assumed to be a reduced fraction in this paper.

Lastly, $\text{diam}(a_1,a_2,\dots, a_k)\le C$ indicates that $|a_i-a_j|\le C$ for any pair $(i,j)$, where $1\le i, j\le k$.   

\section{Improvement on the first spacing problem} \label{first spacing problem}

\subsection{The first spacing problem} Bourgain and Watt first noticed that the double large sieve inequality can be derived using Hölder's inequality (\cite{BourgainWatt1st} Section 5). In this way, there is a freedom to choose the exponent of the norm in the first spacing problem. We follow this observation and work with $q$ a little bit larger than $4$. More specifically, the first spacing problem is asking for a nice upper bound for the following norm:
\begin{equation} \label{original form}
G_q=\Big\|\sum_{k\sim K}\sum_{l\sim L} a_{kl}e(lx_1+klx_2+l\sqrt{k}x_3)\Big\|_{L^q_{\#}\Big[|x_1|\le 1,|x_2|\le 1,|x_3|\le \frac{1}{\eta L\sqrt{K}}\Big]},   
\end{equation}
where $a_{kl}$ are arbitrary coefficients such that $|a_{kl}|\le 1$, $q\ge 4$, and the subscript $\#$ denotes the averaged norm:
\[
\|f\|_{L^p_{\#}(B)}=\bigg(\frac{1}{|B|}\int_{B}|f|^p\bigg)^{\frac{1}{p}}.
\]
In \eqref{original form}, the parameters $K,L$ are integers, $\eta>0$, and they satisfy
\begin{equation}  \label{relations between L,K,eta}
1\le L<K\le \frac{1}{\eta}\le KL.    
\end{equation}
Throughout this paper, we only consider $q$ with the following constraints, 
\begin{equation}   \label{range of q}
 4\le q\le 4.5.    
\end{equation}

\quad

Analogous to the interpretation of the norm in Vinogradov's Mean Value Theorem, if we set $a_{kl}=1$ for all $k,l$ in \eqref{original form}, and let $q=2n$ be an even positive integer, then $G^q_q$ is equal to the number of integer solutions of the following system:

\begin{align}
l_1+\dots +l_n &=l_{n+1}+\dots +l_{2n},     \label{condition 1} 
\\  k_1l_1+\dots +k_n l_n &=k_{n+1}l_{n+1}+\dots +k_{2n} l_{2n}, \label{condition 2}
\\ l_1\sqrt{k_1} +\dots + l_n\sqrt{k_n}&=l_{n+1}\sqrt{k_{n+1}}+\dots + l_{2n}\sqrt{k_{2n}}+O(\eta L\sqrt{K}),   \label{condition 3}
\\ k_i \sim K, l_i\sim L, & \quad \forall i=1,...,2n.  \label{condition 4}
\end{align}
For an even integer $q$, if we let $a_{kl}$ vary, then $G_q$ is maximum when all $a_{kl}=1$. Thus it suffices to look at this special case. 

There are many trivial solutions to the above system. If we set $k_{i+n}=k_i$ and $l_{i+n}=l_i$ for all $i=1,...,n$, the system always holds, no matter what values $k_1,\dots,k_n,l_1,\dots,l_n$ we take. So the number of solutions is $\gtrsim K^n L^n$, which implies 
\begin{equation}  \label{lower bound}
G_q\gtrsim K^{\frac{1}{2}} L^{\frac{1}{2}},  
\end{equation}
when $q=2n\ge 2$. Since we have normalized the measure space in \eqref{original form}, by Hölder's inequality, \eqref{lower bound} should be true for all real $q\ge 2$.  

\subsection{Upper bound on the first spacing problem}

When Huxley worked on the first spacing problem, he considered the case $q=4$, and he derived essentially a sharp estimate (\cite{Huxley1996} Theorem. 13.2.4):
\[
G_4 \lesssim_{\epsilon} K^\epsilon (KL)^{\frac{1}{2}}. 
\]
As stated before, we assume that $q$ is a little bit larger than $4$. We then obtain the following proposition: 
\begin{proposition}[Upper bound on $G_q$]  \label{main proposition statment} 
For $q\ge 4$, if $\eta$ satisfies   
\begin{equation}  \label{main proposition assumption}
\Big(\frac{L}{K}\Big)^{\frac{q-2}{q-4}}\le \eta,    
\end{equation}
then
\begin{equation}    \label{main proposition}
G_q \lesssim_\epsilon \eta^{-\epsilon} \eta^{\frac{q-4}{q(q-2)}} (KL)^{1-\frac{2}{q}}\Big(1+\eta^{\frac{2}{q-2}}K\Big)^{\frac{1}{q}}.    
\end{equation}
\end{proposition}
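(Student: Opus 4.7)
The plan is to reinterpret $G_q$ as an $L^q$ estimate for an exponential sum whose frequencies lie on a quadric cone, and then to invoke Guth--Maldague's small cap decoupling for the cone combined with the H\"older-based refinement of the double large sieve due to Bourgain--Watt.

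The key geometric observation is that $\vec{y}_{(k,l)}=(l,kl,l\sqrt{k})$ satisfies $y_3^2=y_1y_2$, so all these frequencies lie on the light cone $\Gamma=\{z^2=xy\}\subset\mathbb{R}^3$. Factoring as $\vec{y}_{(k,l)}=l\cdot(1,k,\sqrt{k})$ shows that $l$ is the radial coordinate along cone generators while $k$ parameterizes the angular direction. The spatial box $B=[-1,1]^2\times[-1/(\eta L\sqrt{K}),\,1/(\eta L\sqrt{K})]$ is highly anisotropic, thin precisely in the direction conjugate to the generator; after a linear rescaling it becomes a box whose shape is dual to a prescribed cap of $\Gamma$ at a scale $R$ determined by $\eta$, $K$, and $L$.

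At the decoupling step I would apply Guth--Maldague's small cap decoupling for the cone at a cap scale chosen so that each small cap $\theta$ isolates a controlled number $N_\theta$ of lattice frequencies $\vec{y}_{(k,l)}$ on $\Gamma$, counted via an elementary geometric argument from the product structure of the $(k,l)$ index set. On each cap the single-cap piece $f_\theta$ is estimated by H\"older interpolation between the Plancherel bound $\|f_\theta\|_{L^2_\#}\lesssim N_\theta^{1/2}$ and the trivial bound $\|f_\theta\|_{L^\infty}\le N_\theta$; restricting to $q\in(4,4.5]$ is precisely the range where this interpolation --- the Bourgain--Watt derivation of the double large sieve via H\"older --- is productive. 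Summing the cap contributions weighted by the decoupling constant reproduces the main factor $(KL)^{1-2/q}$, while the $\eta$-dependent corrections $\eta^{(q-4)/(q(q-2))}$ and $(1+\eta^{2/(q-2)}K)^{1/q}$ arise from the ratio between the cap scale and the lattice spacing on $\Gamma$, with the two summands of the $(1+\cdots)$ factor corresponding to the dichotomy $N_\theta\lesssim 1$ versus $N_\theta\gg 1$.

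The principal technical hurdle is the anisotropic bookkeeping required to invoke Guth--Maldague at its sharp exponent. The hypothesis $(L/K)^{(q-2)/(q-4)}\le\eta$ should emerge naturally as the threshold at which the small-cap scale $R^{-1}$ forced by $B$ still dominates the lattice spacing along the generator, so that the expected lattice density within each cap is realized; below this threshold the caps become finer than the lattice, and the interpolation argument would need a different input. Carefully tracking the powers of $\eta$ through the rescaling, the cone decoupling inequality, and the H\"older interpolation, and verifying that they assemble into the stated formula, is the crux of the argument.
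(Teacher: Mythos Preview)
Your overall strategy---affine change to put the frequencies on a cone, then Guth--Maldague small cap decoupling, then estimate the cap pieces---matches the paper exactly. The gap is in how you propose to bound the individual cap contributions.

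You plan to interpolate between the Plancherel bound $\|f_\theta\|_{L^2_\#}\lesssim N_\theta^{1/2}$ and the trivial $\|f_\theta\|_{L^\infty}\le N_\theta$. But this yields $\|f_\theta\|_{L^q_\#}^q\lesssim N_\theta^{q-1}$, and summing over the $\sim \eta^{-\beta}$ caps (each containing $N_\theta\sim\eta^{\beta}KL$ frequencies) gives a main factor $(KL)^{1-1/q}$, not the required $(KL)^{1-2/q}$. The lost factor $(KL)^{1/q}$ is fatal: for $q$ near $4$ it is roughly $(KL)^{1/4}$, and no amount of tuning the cap scale recovers it. The $L^2$ bound on a cap carries no arithmetic information---it is pure orthogonality---so interpolating from it cannot see the extra cancellation that the proposition encodes.

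What the paper actually does after decoupling is establish a \emph{nontrivial $L^4$ bound} on the localized pieces (their Lemma on $E_4(\beta_1,\beta_2)$), by counting solutions to the system $l_1+l_2=l_3+l_4$, $k_1l_1+k_2l_2=k_3l_3+k_4l_4$ with the localization constraints $\mathrm{diam}(k_i)\lesssim\eta^{\beta_1}K$, $\mathrm{diam}(l_i)\lesssim\eta^{\beta_2}L$. This counting uses the divisor bound (following Bourgain--Watt's Proposition~7) and produces three terms; the $(1+\eta^{2/(q-2)}K)^{1/q}$ factor in the proposition comes from the competition between these terms, not from an $N_\theta\lesssim1$ versus $N_\theta\gg1$ dichotomy. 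Only then does the paper interpolate, but between $L^4$ and $L^\infty$, giving $E_q^q\lesssim(\eta^\beta KL)^{q-4}E_4^4$. Finally the cap parameters are optimized at $\beta_1+\beta_2=2/(q-2)$ with $\eta^{\beta_1}K=\eta^{\beta_2}L$, and the hypothesis $(L/K)^{(q-2)/(q-4)}\le\eta$ is exactly the condition $\beta_1\ge\tfrac12$ needed for Guth--Maldague to apply. You have the right skeleton, but the Diophantine $L^4$ step is the heart of the argument and cannot be replaced by Plancherel.
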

\begin{remark}
1) In this proposition, the upper bound on $G_q$ is uniform for all choices of $a_{kl}$, as long as we assume that $|a_{kl}|\le 1$. 

2) We note that \eqref{main proposition assumption} is satisfied when $q$ is sufficiently close to $4$, since $\frac{L}{K}< 1$.

3) In our ultimate application of this proposition in Section \ref{final argument}, the parameters satisfy
\[
\eta^{\frac{2}{q-2}}K \lesssim 1,
\]
which implies that the last term in \eqref{main proposition} is $1$. This is convenient for computation.
\end{remark}

\subsection{Proof of Proposition \ref{main proposition statment}}

To prove the proposition, we use the small cap decoupling theorem for a truncated cone proved by Guth and Maldague \cite{GuthMaldague}. Before stating and applying their theorem, we perform some transformations on \eqref{original form}. In this way, the exponential sum can be viewed as the inverse Fourier transform of some Schwartz function defined on a neighborhood of the truncated cone, which fits in the setting of the decoupling theorem.  \\

Let us  start with some definitions.
\begin{definition}[The truncated cone and its neighborhood]  \label{truncated cone}
Let
\begin{equation}   \label{cone definition}
\mathcal{C}=\{(\xi_1,\xi_2,\xi_3):\xi_1^2+\xi_2^2=\xi_3^2, \quad \xi_3\sim 1\}    
\end{equation} 
be the truncated cone, and let $\mathcal{N}_\eta(\mathcal{C})$ be a $\eta$-neighborhood of $\mathcal{C}$ in $\mathbb R^3$.
\end{definition}

We introduce a notation to denote rectangular box centered at the origin. 

\begin{definition} \label{box centered at origin} Define
\[
B(A_1,A_2,A_3) :=\{(x_1,x_2,x_3)\in \mathbb R^3: |x_1|\le A_1,|x_2|\le A_2, |x_3|\le A_3\},
\]    
where $A_1, A_2, A_3>0$.
\end{definition}

By the definition of $G_q$ \eqref{original form} and  periodicity in the first variable, we can extend the range of the variable $x_1$ and then write $G_q$ as 
\begin{equation}  \label{first by periodicity}
G_q=\Big\|\sum_{k\sim K}\sum_{l\sim L} a_{kl}e(lx_1+klx_2+l\sqrt{k}x_3)\Big\|_{L^q_{\#}\Big(B(K,1,\frac{1}{\eta L\sqrt{K}})\Big)}.    
\end{equation}
Then we normalize the variables $k,l$ by rescaling of variables $x_1,x_2,x_3$ in  (\ref{first by periodicity}) so that  
\begin{equation}  \label{variance 1}
G_q=\Big\|\sum_{k\sim K}\sum_{l\sim L} a_{kl} e\Big(\frac{l}{L}x_1+\frac{kl}{KL}x_2+\frac{l\sqrt{k}}{L\sqrt{K}}x_3 \Big)\Big\|_{L^q_{\#}\Big(B(KL,KL,\frac{1}{\eta})\Big)}.
\end{equation}

\quad

It will be convenient to make a change of variable. Hence we let 
\[
s=\frac{l}{L}\sim 1, \quad t=\frac{k}{K}\sim 1.
\]
Then 
\[\Big(\frac{l}{L},\frac{lk}{LK},\frac{l\sqrt{k}}{L\sqrt{L}}\Big)=(s,st,s\sqrt{t})\,.\]
Performing the following affine transformation, 
\begin{equation}\label{trans}
\xi_1=s\sqrt{t}, \quad \xi_2=\frac{t-1}{2}s,\quad \xi_3=\frac{t+1}{2}s\,,
\end{equation}
we see that  $(\xi_1, \xi_2, \xi_3)$ lies on the cone $\mathcal{C}$ \eqref{cone definition}, because 
\begin{equation}   \label{reason for lying on the cone}
\Big(\frac{st+s}{2}\Big)^2 -\Big(\frac{st-s}{2}\Big)^2=s^2 t=(s\sqrt{t})^2.    
\end{equation}

Replacing $s$ by $l/L$ and $t$ by $k/K$,  we know that the relations between $(k,l)$ and $\Vec{\xi}$ are given by 
\begin{equation}  \label{change of variables 1}
\left\{\begin{array}{rcl}
\xi_1 &= & \frac{l}{L}\frac{\sqrt{k}}{\sqrt{K}} \,\,\,\, \sim 1,
\\ \xi_2 &= &\frac{l}{L} \frac{k/K-1}{2}, 
\\ \xi_3 &= & \frac{l}{L} \frac{k/K+1}{2}  \sim 1.
\end{array}\right.    
\end{equation}

Define
\[
\Gamma=\big\{\Vec{\xi}: \Vec{\xi} \text{ is given by } \eqref{change of variables 1} \text{ for some }(k,l) \text{ s.t. }k\sim K, l\sim L \big\}.
\]
We note that the vectors $\Vec{\xi}\in \Gamma$ are $\frac{1}{K}$-separated in the circular direction and $\frac{1}{L}$-separated in the null direction. Also, it is easy to see that the map \eqref{change of variables 1} is bijective from $\{(k,l):k\sim K, l\sim L\}$ to $\Gamma$, so $a_{\Vec{\xi}}=a_{kl}$ can be defined correspondingly.\\

By \eqref{change of variables 1}, \eqref{variance 1} becomes 
\[
\begin{split}
&\Big \|\sum_{\Vec{\xi}\in \Gamma}a_{\Vec{\xi}}\, e\Big((\xi_3-\xi_2)x_1+(\xi_3+\xi_2)x_2+\xi_1x_3\Big)\Big\|_{L^q_{\#}\Big(B(KL,KL,\frac{1}{\eta})\Big)}    
\\ =& \Big\|\sum_{\Vec{\xi}\in \Gamma}a_{\Vec{\xi}} \,e\Big(\xi_1 x_3 +\xi_2(x_2-x_1)+\xi_3(x_1+x_2)\Big)\Big\|_{L^q_{\#}\Big(B(KL,KL,\frac{1}{\eta})\Big)}.
\end{split}
\]
Performing another change of variables, say, 
\begin{equation}    \label{change of variables 2}
\left\{\begin{array}{rcl}
y_1 &=& x_3
\\ y_2 &= & x_2-x_1
\\ y_3 &=&  x_2+x_1\,,
\end{array}    \right.
\end{equation} 
we find that 
\begin{equation} \label{variance 2} G_q =\Big\|\sum_{\Vec{\xi}\in \Gamma}a_{\Vec{\xi}}\,e\Big(\xi_1 y_1+\xi_2 y_2+\sqrt{\xi_1^2+\xi_2^2}y_3\Big)\Big\|_{L^q_{\#}\Big( B(\frac{1}{\eta},KL,KL) \Big)}. 
\end{equation}
Now we are able to view the exponential sum in \eqref{variance 2} as a function whose Fourier transform is supported on $\mathcal N_\eta(\mathcal C)$, a $\eta$-neighborhood of the cone $\mathcal C$, since $\eta\ge \frac{1}{KL}$.  To see why this is true, we let $\psi$ be a smooth bump function supported in a neighborhood of $[-1,1]$ and taking the value $1$ on $[-1,1]$, and define
\[
\begin{split}
F(z_1,z_2,z_3)  = \frac{K^2L^2}{\eta} \sum_{\Vec{\xi}\in \Gamma}  a_{\Vec{\xi}} \,  \widehat{\psi}\Big(\frac{z_1-\xi_1}{\eta}\Big) \widehat{\psi}\Big(\frac{z_2-\xi_2}{\frac{1}{KL}} \Big)  \widehat{\psi}\Big(\frac{z_3-\sqrt{\xi_1^2+\xi_2^2}}{\frac{1}{KL}}\Big).  
\end{split}
\]
Then its inverse Fourier transform is 
\[
\widecheck{F}(y_1\!,y_2,\!y_3)\!= \!\!\sum_{\Vec{\xi}\in \Gamma}\!a_{\Vec{\xi}}\,e\Big(\!\xi_1 y_1\!+\xi_2 y_2\!+\sqrt{\xi_1^2+\xi_2^2}y_3\!\Big) \psi \Big( \frac{y_1}{\frac{1}{\eta}}\Big) \psi \Big(\frac{y_2}{KL}\Big) \psi\Big(\frac{y_3}{KL}\Big),    
\]
which is the sum on the right side of  \eqref{variance 2} with a Schwartz tail. From now on, by an abuse of notation, we change the dummy variables $(y_1,y_2,y_3)$ to $(x_1,x_2,x_3)$. \\

At this step, we can apply a small cap decoupling theorem of Guth and Maldague to \eqref{variance 2}. Before stating their result, we introduce some concepts first. A generic plate $\sigma$ of dimensions $\eta^{\beta_2}\times \eta^{\beta_1}\times \eta$ is a rectangular box in a $\eta$-neighborhood  of $\mathcal C$ such that $\eta^{\beta_2}$ is the length in the null direction, $\eta^{\beta_1}$ is the length in the circular direction, and $\eta$ is the thickness of the plate. $\mathcal N_\eta(\mathcal C)$ can be covered by essentially pairwise disjoint generic plates $\sigma$ of dimensions $\eta^{\beta_2}\times \eta^{\beta_1}\times \eta$,
where $\beta_2\in[0,1]$, $\beta_1\in [\frac{1}{2},1]$.  By the essential disjointness, we mean that those plates may have finite overlaps but can be divided into finitely many sets, each of which contains disjoint plates. Thus we can view the collection 
of those essentially disjoint generic plates as a partition of $\mathcal N_\eta(\mathcal C)$. Given a Schwartz function $f$, we define $f_\sigma$ by setting its Fourier transform 
\[
\widehat{f_\sigma}=\widehat{f} \cdot \chi_\sigma,  
\]
where $\chi_\sigma$ is the characteristic function of $\sigma$. We now are ready to state Guth and Maldague's theorem. 

\begin{theorem}[\cite{GuthMaldague}, Thm 3]\label{small cap decoupling}
Let $\beta_1\in [\frac{1}{2},1]$ and $\beta_2\in[0,1]$. For $q\ge 2$ and any Schwartz function $f:\mathbb R^3\to \mathbb C$ with Fourier transform supported in $\mathcal{N}_\eta(\mathcal{C})$, we have 
\begin{equation}\label{sm-cap}
\int_{\mathbb R^3} |f|^q \lesssim_\epsilon \eta^{-\epsilon}  D^q_{\beta_1, \beta_2, q}   \sum_{\sigma} \|f_\sigma\|_{L^q(\mathbb R^3)}^q \,,
\end{equation}
where the decoupling constant $D_{\beta_1, \beta_2, q}$ is given by
\[
D_{\beta_1,\beta_2,q}=\eta^{-(\beta_1+\beta_2)(\frac{1}{2}-\frac{1}{q})}+\eta^{-(\beta_1+\beta_2)(1-\frac{2}{q})+\frac{1}{q}}+\eta^{-(\beta_1+\beta_2-\frac{1}{2})(1-\frac{2}{q})}   \,.
\]
\end{theorem}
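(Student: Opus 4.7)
The plan is to prove \eqref{sm-cap} by following the Guth--Maldague high--low frequency scheme, combining the canonical $\ell^2$-decoupling theorem for the cone (Wolff, Bourgain--Demeter) with a wave-packet and incidence analysis at the finer scale of small plates $\sigma$. The decoupling constant $D_{\beta_1,\beta_2,q}$ appears as a sum of three qualitatively different bounds, each of which dominates in a distinct region of the $(\beta_1,\beta_2,q)$ parameter space; the proof accordingly produces three parallel estimates and combines them.

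\textbf{Step 1 (canonical cone decoupling).} First partition $\mathcal N_\eta(\mathcal C)$ into essentially disjoint Bourgain--Demeter canonical sectors $\tau$ of dimensions $1\times\eta^{1/2}\times\eta$. The $\ell^2$-decoupling theorem for the cone gives
\[
\|f\|_{L^q(\mathbb R^3)}\lesssim_\epsilon \eta^{-\epsilon}\Bigl(\sum_\tau \|f_\tau\|_{L^q}^2\Bigr)^{1/2}\quad (2\le q\le 6),
\]
with a corresponding bound for $q>6$ after interpolating with the trivial $L^\infty$ endpoint.

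\textbf{Step 2 (flat decoupling to small plates).} Since $\beta_1\ge\tfrac12$ and $\beta_2\ge 0$, each canonical sector $\tau$ contains roughly $\eta^{\frac12-\beta_1-\beta_2}$ small plates $\sigma$. Within a single $\tau$ the cone is essentially flat at the scale $\eta$, so a flat (Cauchy--Schwarz) decoupling in the index $\sigma\subset\tau$ yields
\[
\|f_\tau\|_{L^q}^q\le\bigl(\#\{\sigma\subset\tau\}\bigr)^{q/2-1}\sum_{\sigma\subset\tau}\|f_\sigma\|_{L^q}^q.
\]
Concatenating Steps 1 and 2 produces the factor $\eta^{-(\beta_1+\beta_2-\frac12)(1-\frac2q)}$, which is the third term of $D_{\beta_1,\beta_2,q}$.

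\textbf{Step 3 (the other two terms via global and wave-packet arguments).} A purely global flat decoupling over all $\eta^{-\beta_1-\beta_2}$ small plates, $\|f\|_{L^q}^q\le(\#\sigma)^{q/2-1}\sum_\sigma\|f_\sigma\|_{L^q}^q$, ignores the cone curvature entirely and delivers the first term $\eta^{-(\beta_1+\beta_2)(\frac12-\frac1q)}$. The second term $\eta^{-(\beta_1+\beta_2)(1-\frac2q)+\frac1q}$ is the output of the Guth--Maldague high--low frequency argument: decompose $f$ according to whether its physical-space wave packets are broadly or narrowly concentrated; estimate the broad part via a reverse square-function bound and an $L^4$ Cordoba-type incidence count for the tubes dual to the small plates on the cone; and iterate across dyadic scales. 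Summing the three resulting bounds and absorbing the $\eta^{-\epsilon}$ losses completes \eqref{sm-cap}.

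\textbf{Main obstacle.} The decisive difficulty is the incidence/Kakeya-type estimate needed in the high--low step for the second term: one must count incidences between the thin tubes dual to plates of arbitrary dimensions $\eta^{\beta_2}\times\eta^{\beta_1}\times\eta$ on the cone, uniformly across the two-parameter family $(\beta_1,\beta_2)$, and with losses at each dyadic scale mild enough to be absorbed by $\eta^{-\epsilon}$ after iteration. Adapting the Guth--Maldague paraboloid small-cap framework to the cone, where the extra null direction creates a second independent scale, and making the square-function bound robust enough to cover the entire parameter range, is the principal technical burden; once that refined incidence estimate is in hand, the remaining bookkeeping across Steps 1--3 is routine.
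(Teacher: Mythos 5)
The paper does not prove Theorem~\ref{small cap decoupling} at all: it is invoked as a black box, cited verbatim as Theorem~3 of Guth and Maldague \cite{GuthMaldague}. There is consequently no in-paper proof for your attempt to be compared against, and the reader of the paper is simply expected to consult the reference. Attempting to reconstruct the Guth--Maldague argument is therefore doing more than the paper asks, which is fine, but it means your sketch should be judged as a reconstruction of the cited proof rather than of anything in this manuscript.

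As such a reconstruction, the high-level architecture you describe (a trivial flat decoupling bound, a Bourgain--Demeter canonical decoupling followed by flat decoupling within a sector, and a high--low wave-envelope argument for the remaining term) is the right shape, but the accounting of the three terms does not check out as written. In Step~2 you claim $\|f_\tau\|_{L^q}^q\le(\#\{\sigma\subset\tau\})^{q/2-1}\sum_{\sigma\subset\tau}\|f_\sigma\|_{L^q}^q$ from ``flat (Cauchy--Schwarz) decoupling,'' i.e.\ an $\ell^q$ constant $(\#\sigma)^{\frac12-\frac1q}$; but the genuine flat $\ell^q$ decoupling constant for $N$ parallel plates is $N^{1-\frac{2}{q}}$, and the sharper exponent you wrote would require $L^2$-orthogonality that is not available at the $L^q$ level. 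More to the point, even granting your claimed Step~2 bound, concatenating Step~1 with Step~2 (canonical $\ell^2$ decoupling over $\tau$, then Hölder $\ell^2\to\ell^q$ over $\tau$, then your intra-$\tau$ bound) yields the constant $\eta^{-(\beta_1+\beta_2)(\frac12-\frac1q)}$, i.e.\ the \emph{first} term of $D_{\beta_1,\beta_2,q}$, not the third term $\eta^{-(\beta_1+\beta_2-\frac12)(1-\frac2q)}$ you attribute to it. So the correspondence between each of your sub-arguments and the three summands in $D_{\beta_1,\beta_2,q}$ needs to be rethought before the sketch can be called a proof of the stated constant. None of this affects the paper, which legitimately takes the theorem for granted.
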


Applying Theorem \ref{small cap decoupling} to the right side of  \eqref{variance 2}, we get the following lemma immediately.

\begin{lemma}
Let $\beta_1\in [\frac{1}{2},1]$ and $\beta_2\in[0,1]$. For $q\ge 4$,
\begin{equation}  \label{application of small cap decoupling} 
G_q \lesssim_\epsilon \eta^{-\epsilon} D_{\beta_1,\beta_2,q}\Big(\sum_{\sigma}\Big\|\sum_{\Vec{\xi}\in \sigma} F_{\Vec{\xi}} \,(x_1,x_2,x_3)\Big\|^q_{L^q_{\#}\Big(B(\frac{1}{\eta},KL,KL)\Big)}\Big)^\frac{1}{q},   
\end{equation}
where 
\[
F_{\Vec{\xi}}\,(x_1,x_2,x_3)=a_{\Vec{\xi}}\,e\Big(\xi_1 x_1+\xi_2 x_2+\sqrt{\xi_1^2+\xi_2^2}x_3\Big).
\]
We note that $\Vec{\xi}\in \sigma$ in the right side of (\ref{application of small cap decoupling}) can be replaced by
 $\Vec{\xi}\in (\Gamma\cap \sigma)$ because only  those $ \Vec{\xi}\in\Gamma$ makes contributions. 
\end{lemma}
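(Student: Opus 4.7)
The plan is to apply Theorem \ref{small cap decoupling} directly to the Schwartz function $\widecheck{F}$ constructed immediately above the statement, whose Fourier transform $F$ is supported in $\mathcal{N}_\eta(\mathcal{C})$. First I would observe that on the box $B(\frac{1}{\eta}, KL, KL)$ all three cutoffs $\psi(y_1/\eta^{-1})$, $\psi(y_2/(KL))$, $\psi(y_3/(KL))$ equal $1$, so $\widecheck{F}$ coincides there with the exponential sum $\sum_{\Vec{\xi}\in\Gamma} a_{\Vec{\xi}} F_{\Vec{\xi}}$, while decaying rapidly off the box. Consequently, after normalizing by the volume $|B(\frac{1}{\eta},KL,KL)|\sim \frac{K^2L^2}{\eta}$, the quantity $\frac{1}{|B|}\int_{\mathbb{R}^3}|\widecheck{F}|^q$ reproduces $G_q^q$ up to a Schwartz error that can be absorbed into the $\eta^{-\epsilon}$ factor.

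Next, apply Theorem \ref{small cap decoupling} to $\widecheck{F}$. The Fourier transform $F$ is a superposition of bumps of widths $\eta\times \frac{1}{KL}\times \frac{1}{KL}$ centered at the points $(\xi_1,\xi_2,\sqrt{\xi_1^2+\xi_2^2})$ for $\Vec{\xi}\in\Gamma$. Each such bump is much thinner than any plate $\sigma$, whose dimensions are $\eta^{\beta_2}\times \eta^{\beta_1}\times \eta$ with $\beta_1\ge \frac{1}{2}$ and $\beta_2\in [0,1]$ (recall from \eqref{relations between L,K,eta} that $\eta\le \frac{1}{K}\le \frac{1}{L}$). Hence the Fourier-localization $F\cdot \chi_\sigma$ essentially isolates those $\Vec\xi\in\Gamma$ lying in $\sigma$, and inverting the Fourier transform gives that $(\widecheck{F})_\sigma$ agrees with $\sum_{\Vec\xi\in \Gamma\cap \sigma}a_{\Vec\xi}F_{\Vec\xi}$ multiplied by the same $\psi$-cutoffs. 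Computing $\|(\widecheck{F})_\sigma\|_{L^q(\mathbb R^3)}^q$ yields $|B(\frac{1}{\eta},KL,KL)|$ times the averaged $L^q_\#$-norm over the same box, again up to negligible Schwartz tails.

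Putting the two normalizations together and dividing both sides of \eqref{sm-cap} by $|B(\frac{1}{\eta},KL,KL)|$ produces exactly \eqref{application of small cap decoupling}. The only real obstacle is the bookkeeping of the $L^q$-versus-$L^q_\#$ conversion: one must check that the box volume cancels consistently on both sides of the decoupling inequality and that the Schwartz tails of $\widecheck{F}$ outside $B(\frac{1}{\eta},KL,KL)$, as well as the Schwartz tails of each $(\widecheck{F})_\sigma$ outside its dual box, contribute only an $\eta^{O(\epsilon)}$ factor that can be absorbed into $\eta^{-\epsilon}$. The observation $\Vec\xi\in\sigma\Leftrightarrow \Vec\xi\in \Gamma\cap\sigma$ appended to the lemma statement is then justified by the fact that only the discrete family $\Gamma$ contributes to $F$ in the first place.
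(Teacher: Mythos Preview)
Your proposal is correct and follows the same approach as the paper, which simply states that the lemma follows ``immediately'' from applying Theorem~\ref{small cap decoupling} to the function $\widecheck{F}$ built just before the statement; you have merely spelled out the standard bookkeeping that the paper leaves implicit. One small point: the inequality you actually need to ensure each bump sits inside $O(1)$ plates is $\frac{1}{KL}\le \eta$ (i.e.\ the last inequality in \eqref{relations between L,K,eta}), rather than $\eta\le \frac{1}{K}\le \frac{1}{L}$ as you wrote, but this is also contained in \eqref{relations between L,K,eta}.
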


We can simplify the decoupling constant $D_{\beta_1, \beta_2, q}$ when $q\geq 4$. In fact, notice that for $q\ge 4$,
\[
-(\beta_1+\beta_2)(1-\frac{2}{q})+\frac{1}{q}-[-(\beta_1+\beta_2-\frac{1}{2})(1-\frac{2}{q})]
= -\frac{1}{2}+\frac{2}{q} 
\le 0,
\]
which implies 
\[
\eta^{-(\beta_1+\beta_2)(1-\frac{2}{q})+\frac{1}{q}}\ge \eta^{-(\beta_1+\beta_2-\frac{1}{2})(1-\frac{2}{q})},
\]
because $0<\eta<1$. Thus for $q\ge 4$, up to a constant multiple,
\begin{equation}  \label{decoupling constant}
D_{\beta_1,\beta_2,q}=\eta^{-(\beta_1+\beta_2)(\frac{1}{2}-\frac{1}{q})}+\eta^{-(\beta_1+\beta_2)(1-\frac{2}{q})+\frac{1}{q}}.   
\end{equation}  

From (\ref{change of variables 1}), we see that each pair $(k, l)$ corresponds to a vector $\Vec{\xi}=\vec{\xi}(k, l)=
(\xi_1(l,k), \xi_2(l,k), \xi_3(l,k))$
such that $\xi_1(l,k)=\frac{l\sqrt{k}}{L\sqrt{K}} $, $ \xi_2(l,k)=\frac{l}{L} \frac{k/K-1}{2} $ and $\xi_3(l,k)=
 \frac{l}{L} \frac{k/K+1}{2}$.  Define 
\begin{equation}
{\mathcal R}_\sigma := \bigg\{(k, l)\in\mathbb Z^2: k\sim K, l\sim L,  \vec{\xi}(k, l)\in\sigma\bigg\}\,. 
\end{equation}

\begin{lemma}  \label{R-sigma}
 ${\mathcal R}_\sigma$ is contained a rectangle 
\[
I_\sigma \times J_\sigma \subset \{(k,l)\in \mathbb Z^2: k\sim K,l\sim L\}
\]
of dimensions (up to constant multiples)
\[
(1+\eta^{\beta_2}K) \times (1+\eta^{\beta_1}L).
\]
\end{lemma}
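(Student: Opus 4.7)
The strategy is to linearize the smooth parametrization $\Phi:(k, l)\mapsto \Vec\xi(k, l)$ of \eqref{change of variables 1} at a point of $\mathcal R_\sigma$, translate the plate constraint into two linear inequalities in $(k - k_0, l - l_0)$, and inscribe the resulting parallelogram into an axis-aligned rectangle. Assuming $\mathcal R_\sigma \neq \emptyset$ (else the claim is vacuous), fix $(k_0, l_0)\in \mathcal R_\sigma$ and write $\Vec\xi_0 := \Phi(k_0, l_0)$. At $\Vec\xi_0$ I set up the orthonormal frame $\{\hat n, \hat c, \hat\nu\}$: $\hat n$ is the unit generator of $\mathcal C$ through $\Vec\xi_0$, $\hat c$ is the unit tangent to the horizontal circle on $\mathcal C$ through $\Vec\xi_0$, and $\hat\nu$ is the unit outward normal to $\mathcal C$. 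By definition, $\sigma$ forces the $\hat n$-, $\hat c$-, and $\hat\nu$-components of $\Vec\xi - \Vec\xi_0$ to lie in intervals of lengths $\eta^{\beta_2}$, $\eta^{\beta_1}$, $\eta$ respectively, and the $\hat\nu$-constraint is automatic because $\Vec\xi(k, l) \in \mathcal C$.

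Next I would read off the Jacobian $D\Phi(k_0, l_0)$ directly from \eqref{change of variables 1}. Since varying $l$ with $k$ held fixed rescales $\Vec\xi$ along the generator through the origin, $\partial_l \Vec\xi$ is parallel to $\hat n$ with magnitude $\asymp L^{-1}$ and has zero $\hat c$-component; varying $k$ with $l$ held fixed moves $\Vec\xi$ within the tangent plane to $\mathcal C$ at $\Vec\xi_0$, and $\partial_k \Vec\xi$ has nonzero components of magnitude $\asymp K^{-1}$ in both $\hat n$ and $\hat c$, with coefficients that are uniformly bounded above and below as $(k, l)$ ranges over $\{k\sim K, l\sim L\}$. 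The plate condition then becomes, to leading order, a pair of linear inequalities in $(k - k_0, l - l_0)$: the $\hat c$-inequality involves only $(k - k_0)$ (because $\partial_l \Vec\xi\cdot\hat c = 0$) and confines it to an interval of length $\asymp K$ times the circular plate width; plugging this bound into the $\hat n$-inequality absorbs the cross term in $(k-k_0)$ and confines $(l - l_0)$ to an interval of length $\asymp L$ times the null plate width. Taking the smallest integer intervals containing these two yields $I_\sigma$ and $J_\sigma$ with $\mathcal R_\sigma \subset I_\sigma \times J_\sigma$ and the claimed sidelengths, the $+1$'s ensuring that each factor contains at least one lattice point even when $\eta^{\beta_i} K$ or $\eta^{\beta_i} L$ drops below $1$.

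The main technical point, and the only place the argument requires genuine care, is that this linearization remain valid on the full scales $|k - k_0|\asymp \eta^{\beta_1} K$ and $|l - l_0|\asymp \eta^{\beta_2} L$, despite the curvature of the cone in the circular direction. A second-order Taylor expansion of $\Phi$ about $(k_0, l_0)$ produces corrections of relative size $(|k - k_0|/K)^2 \lesssim \eta^{2\beta_1}$ in the $\hat c$-projection, and analogous (smaller) terms in the $\hat n$-projection. Because $\beta_1\ge \tfrac 12$ by hypothesis, $\eta^{2\beta_1}\le \eta$, which is precisely the plate's normal thickness; hence these quadratic remainders do not push $\Vec\xi(k, l)$ out of $\sigma$, they may be absorbed into the implicit constants, and the linearized analysis above is justified, completing the proof.
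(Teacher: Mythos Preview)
Your approach is essentially the paper's: both arguments rest on the observation that varying $l$ moves $\vec\xi$ purely along the generator (null direction) while varying $k$ moves it in the circular direction. The paper's proof is two sentences and omits the Jacobian bookkeeping and the curvature check that you supply, so your write-up is the more careful of the two.

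Two points deserve comment. First, your conclusion assigns the side lengths as $|I_\sigma|\asymp 1+\eta^{\beta_1}K$ and $|J_\sigma|\asymp 1+\eta^{\beta_2}L$, which is the \emph{opposite} of the displayed statement of the lemma. Your assignment is the one forced by the plate convention given just above Theorem~\ref{small cap decoupling} ($\eta^{\beta_1}$ circular, $\eta^{\beta_2}$ null), and it is also the one actually used downstream in conditions \eqref{conditions on beta1,2}, \eqref{condition 5'}, \eqref{condition 6'}; the lemma statement and its two-line proof in the paper appear to carry a harmless $\beta_1\leftrightarrow\beta_2$ typo. So you have proved the ``right'' version.

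Second, the step ``plugging this bound into the $\hat n$-inequality absorbs the cross term'' is not automatic. The cross term coming from $\partial_k\vec\xi\cdot\hat n$ has size $\asymp |k-k_0|/K\lesssim \eta^{\beta_1}$, while the $\hat n$-window has width $\eta^{\beta_2}$; absorption therefore requires $\beta_1\ge\beta_2$. That inequality is not part of the hypotheses of the lemma, though it does hold for the specific choice \eqref{choices of beta1 beta2 q} made later (since $K\ge L$ forces $\eta^{\beta_1}\le\eta^{\beta_2}$). Without it you only get $|l-l_0|\lesssim L\,\eta^{\min(\beta_1,\beta_2)}$, so either add this as a standing assumption or note that the weaker bound still suffices because only the product $|I_\sigma|\,|J_\sigma|\lesssim \eta^{\beta_1+\beta_2}KL$ is used afterwards. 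The paper's terse proof glosses over the same point.
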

\begin{proof} By \eqref{change of variables 1}, a variation of the vector $\Vec{\xi}=(\xi_1,\xi_2,\sqrt{\xi_1^2+\xi_2^2})$ in the null direction corresponds to a change in $\frac{l}{L}$. If $\Vec{\xi}$ moves in the null direction by length $\eta^{\beta_1}$, then $l$ moves by $\eta^{\beta_1}L$ units, so $l$ lies in an interval $J_\sigma$ of size $1+\eta^{\beta_1}L$. 

Similarly,  if $\Vec{\xi}$ moves in the circular direction by length $\eta^{\beta_2}$, then $k$ moves by $\eta^{\beta_2}K$ units, so $k$ lies in an interval $I_\sigma$ of size $1+\eta^{\beta_2}K$.   \end{proof}

By Lemma \ref{R-sigma}, reversing the changes of variables \eqref{change of variables 1} and \eqref{change of variables 2}, we obtain 
\begin{equation}   \label{change back}
\begin{split}
&\Big \|\sum_{\Vec{\xi}\in \sigma}F_{\Vec{\xi}}\,(x_1,x_2,x_3) \Big\|^q_{L^q_{\#}\Big(B(\frac{1}{\eta},KL,KL)\Big)}    
\\ =&\Big\|\sum_{(k,l)\in {\mathcal R}_\sigma} a_{kl}e\Big(lx_1+klx_2+l\sqrt{k}x_3\Big)\Big\|^q_{L^q_{\#}\Big(B(1,1,\frac{1}{\eta L\sqrt{K}})\Big)}.   
\end{split}    
\end{equation}
Henceforth, by \eqref{application of small cap decoupling}, \eqref{decoupling constant} and \eqref{change back}, we can conclude the following lemma. 
\begin{lemma}   \label{after small cap decoupling}
Let $\beta_1\in [\frac{1}{2},1]$ and $\beta_2\in[0,1]$. For $q\ge 4$,
\begin{equation}  \label{variance 3}
G_q\lesssim_\epsilon  \eta^{-\epsilon} \Big[\eta^{-(\beta_1+\beta_2)(\frac{1}{2}-\frac{1}{q})}+\eta^{-(\beta_1+\beta_2)(1-\frac{2}{q})+\frac{1}{q}}\Big] \cdot E_q(\beta_1,\beta_2),
\end{equation}  
where 
\begin{equation}  \label{definition of Eq}
E_q(\beta_1,\beta_2):=\Big(\sum_{\sigma}\Big\|\sum_{(k,l)\in \mathcal R_\sigma} a_{kl}e\Big(lx_1+klx_2+l\sqrt{k}x_3\Big)\Big\|^q_{L^q_{\#}\Big(B(1,1,\frac{1}{\eta L\sqrt{K}})\Big)}\Big)^{\frac{1}{q}}.    
\end{equation}
\end{lemma}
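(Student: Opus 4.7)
The lemma is essentially a consolidation of three pieces that have already been assembled earlier in the section: the output \eqref{application of small cap decoupling} of Theorem \ref{small cap decoupling}, the simplified form \eqref{decoupling constant} of the decoupling constant for $q\ge 4$, and the change-of-variables identity \eqref{change back}. The plan is to string these three ingredients together; no further analytic input is needed.

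First I would start from \eqref{application of small cap decoupling}, which is the direct application of Theorem \ref{small cap decoupling} to the representation of $G_q$ in \eqref{variance 2} (after interpreting that exponential sum, as explained in the discussion preceding Theorem \ref{small cap decoupling}, as an inverse Fourier transform of a Schwartz function supported in $\mathcal N_\eta(\mathcal C)$, up to an irrelevant smooth cutoff).

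Second, I would replace the three-term decoupling constant $D_{\beta_1,\beta_2,q}$ by the two-term expression in \eqref{decoupling constant}. This is the elementary calculation already carried out after \eqref{application of small cap decoupling}: the inequality
\[
-(\beta_1+\beta_2)\bigl(1-\tfrac{2}{q}\bigr)+\tfrac{1}{q} \;\ge\; -\bigl(\beta_1+\beta_2-\tfrac{1}{2}\bigr)\bigl(1-\tfrac{2}{q}\bigr)
\]
holds precisely when $q\ge 4$, so (because $0<\eta<1$) the second term in $D_{\beta_1,\beta_2,q}$ dominates the third.

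Third, for each plate $\sigma$, I would undo the linear change of variables \eqref{change of variables 2} and the rescaling used in passing from \eqref{first by periodicity} to \eqref{variance 1}, together with the identification \eqref{change of variables 1} between $\Vec{\xi}\in \Gamma\cap\sigma$ and $(k,l)\in \mathcal R_\sigma$ provided by Lemma \ref{R-sigma}. This transforms the inner norm on $B(\frac{1}{\eta}, KL, KL)$ into the averaged $L^q$ norm on $B(K, 1, \frac{1}{\eta L\sqrt{K}})$ of the sum $\sum_{(k,l)\in \mathcal R_\sigma} a_{kl}\,e(lx_1+klx_2+l\sqrt{k}\,x_3)$. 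Since $l$ is an integer, this integrand has period $1$ in $x_1$, so the averaged norm reduces to the norm on $B(1, 1, \frac{1}{\eta L\sqrt{K}})$, which is precisely \eqref{change back}. Assembling the three steps then yields \eqref{variance 3} with $E_q(\beta_1,\beta_2)$ as in \eqref{definition of Eq}. The only obstacle of any substance is the bookkeeping of Jacobians and normalizations: the linear rescalings produce determinant factors that must cancel exactly against the normalization constants built into the averaged norms $L^q_\#$, which is precisely why the formulation in terms of averaged norms is convenient here. Once this bookkeeping is verified the lemma follows immediately.
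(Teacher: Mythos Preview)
Your approach is exactly the paper's: the lemma is stated immediately after the sentence ``Henceforth, by \eqref{application of small cap decoupling}, \eqref{decoupling constant} and \eqref{change back}, we can conclude the following lemma,'' and those are precisely the three ingredients you cite. One small slip: the displayed inequality in your second step has the wrong direction. The paper computes $E_2-E_3=-\tfrac12+\tfrac2q\le 0$ for $q\ge4$, so the exponent of the second term is \emph{at most} that of the third; since $0<\eta<1$ this indeed makes the second term dominate, but your written inequality $E_2\ge E_3$ would give the opposite conclusion.
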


In applications of Lemma \ref{after small cap decoupling}, we need to check that our choices of parameters $\beta_1,\beta_2$ are valid. Most importantly, we need to verify that
\begin{equation}   \label{beta1 da yu 1/2}
 \beta_1 \ge \frac{1}{2}.    
\end{equation}
Moreover, we have to choose the parameters $\beta_1$ and $\beta_2$ to obey the following constraints,  
\begin{equation}  \label{conditions on beta1,2}
\eta^{\beta_1}K\ge 1, \quad  \eta^{\beta_2}L\ge 1.   
\end{equation}
Otherwise the intervals $I_\sigma,J_\sigma$ are of length $1$,  
and the decoupling constant $D_{\beta_1,\beta_2,q}$ gets bigger, which makes the right hand side of \eqref{variance 3} a worse upper bound. From now on, we assume \eqref{conditions on beta1,2} to be true. Of course, we need to check its validity after we make the choices of the parameters $\beta_1,\beta_2$. 

With Lemma \ref{after small cap decoupling} in hand,  it remains to  estimate $E_q(\beta_1,\beta_2)$. We first consider the case when $q$ is an even integer $2n$ with $n\geq 2$. In this case,   because of Lemma \ref{R-sigma} and the fact that the $\mathcal R_\sigma$'s are disjoint, $E_q(\beta_1,\beta_2)^q$ is controlled by  the number of solutions of the following system, called System $(*)$,  containing the following equations and conditions 
from (\ref{condition 1'}) to (\ref{condition 6'}).
\begin{align}
l_1+\dots +l_n &=l_{n+1}+\dots +l_{2n},     \label{condition 1'} 
\\  k_1l_1+\dots +k_n l_n &=k_{n+1}l_{n+1}+\dots +k_{2n} l_{2n}, \label{condition 2'}
\\ l_1\sqrt{k_1} +\dots + l_n\sqrt{k_n}&=l_{n+1}\sqrt{k_{n+1}}+\dots + l_{2n}\sqrt{k_{2n}}+O(\eta L\sqrt{K}),   \label{condition 3'}
\\ k_i \sim K, l_i\sim L, & \quad \forall i=1,...,2n,   \label{condition 4'}
\\ \text{diam}(k_1,\cdots,k_{2n})&\lesssim \eta^{\beta_1}K ,  \label{condition 5'}
\\ \text{diam}(l_1,\cdots,l_{2n})&\lesssim \eta^{\beta_2}L . \label{condition 6'}
\end{align}
Here the notation $\text{diam}(a_1,\dots,a_k)$ was introduced in Section \ref{notations}.  \\


Comparing with the system restricted by \eqref{condition 1}-\eqref{condition 4}, we simply add two more conditions \eqref{condition 5'}, \eqref{condition 6'} in System $(*)$. We call them ``localization conditions", since they localize the variables $k_i,l_i$. \\

For $q=4$, we have the following useful lemma.
\begin{lemma}  \label{Lemma number of solutions q=4}
\begin{equation} \label{number of solutions q=4}
E_4(\beta_1,\beta_2) \lesssim_\epsilon K^{\frac{1}{4}+\epsilon}L^{\frac{1}{4}} \Big( \eta^{2(\beta_1+\beta_2)}K^2 L+\eta^{2\beta_1}K^2+\eta^{2\beta_2}L^2 \Big)^{\frac{1}{4}}.    
\end{equation}
\end{lemma}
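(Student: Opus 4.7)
The plan is to rewrite $E_4(\beta_1,\beta_2)^4$ as an integer-point counting problem for System $(*)$ with $n=2$, and then stratify the count into three regimes matching the three terms of the target. First, I expand $\|\cdot\|_{L^4_\#}^4$ and use orthogonality: exact for the periodic variables $x_1,x_2\in[-1,1]$, and approximate (via the Fourier transform of the indicator of $|x_3|\le 1/(\eta L\sqrt K)$) in $x_3$. This yields $E_4(\beta_1,\beta_2)^4 \asymp \sum_\sigma N_\sigma$, where $N_\sigma$ counts quadruples $((k_i,l_i))_{i=1}^4 \in \mathcal R_\sigma^4$ satisfying \eqref{condition 1'}--\eqref{condition 3'}. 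Because the $\mathcal R_\sigma$ are essentially disjoint with each contained in $I_\sigma\times J_\sigma$ of the dimensions in Lemma \ref{R-sigma}, summing over $\sigma$ recovers exactly $N$, the total number of integer solutions to System $(*)$. Thus it suffices to prove $N \lesssim_\epsilon K^{1+\epsilon}L\bigl(\eta^{2(\beta_1+\beta_2)}K^2L+\eta^{2\beta_1}K^2+\eta^{2\beta_2}L^2\bigr)$.

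The count $N$ then splits into three principal regimes, each producing one of the target terms. \emph{Case A (all $k_i$ equal):} equations \eqref{condition 2'}--\eqref{condition 3'} collapse into \eqref{condition 1'}, yielding $\sim K\cdot L(\eta^{\beta_2}L)^2 = \eta^{2\beta_2}KL^3$, the third term. \emph{Case B (all $l_i$ equal):} equation \eqref{condition 1'} is vacuous, \eqref{condition 2'} becomes $k_1+k_2=k_3+k_4$, and \eqref{condition 3'} --- via $(\sqrt a+\sqrt b)^2 = a+b+2\sqrt{ab}$ --- reduces to $|k_1k_2-k_3k_4|=O(\eta K^2)$; the hypothesis $\beta_1\ge\frac12$ makes this product condition automatic given $\mathrm{diam}(k_i)\le \eta^{\beta_1}K$, producing $L\cdot K(\eta^{\beta_1}K)^2 = \eta^{2\beta_1}K^3L$, the second term. \emph{Case C (generic):} I parametrize $k_i = k_1 + k_i'$, $l_i = l_1 + l_i'$; equation \eqref{condition 1'} determines $l_2' = l_3'+l_4'$, and \eqref{condition 2'} then determines $k_2'$ linearly in the remaining variables modulo $l_2$, where the standard divisor bound absorbs the $1/L$ divisibility loss into an overall $K^\epsilon$ factor. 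Taylor-expanding $\sqrt{k_i}$ around $\sqrt{k_1}$ and invoking \eqref{condition 1'}--\eqref{condition 2'} annihilates the constant and linear terms in $k_i'$, leaving a leading residual $\sim \frac{l_1}{k_1^{3/2}}\bigl|\sum\pm(k_i')^2\bigr|$ in \eqref{condition 3'} whose natural size $\eta^{2\beta_1}L\sqrt K$ is bounded by the tolerance $\eta L\sqrt K$ precisely when $\beta_1\ge \frac12$, so \eqref{condition 3'} imposes no additional restriction. Counting then gives $\lesssim_\epsilon K^\epsilon \eta^{2(\beta_1+\beta_2)}K^3L^2$, the first term.

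To finish, the trivial solutions with $\{(k_1,l_1),(k_2,l_2)\}=\{(k_3,l_3),(k_4,l_4)\}$ as multisets contribute $\lesssim \sum_\sigma|\mathcal R_\sigma|^2 \le (\max_\sigma |\mathcal R_\sigma|)(\sum_\sigma |\mathcal R_\sigma|) \lesssim (1+\eta^{\beta_1}K)(1+\eta^{\beta_2}L)\cdot KL$, and each of the resulting four summands is absorbed into one of the three target terms using $\eta^{\beta_1}K\ge 1$ and $\eta^{\beta_2}L\ge 1$ from \eqref{conditions on beta1,2}. The intermediate degenerate configurations (e.g., exactly two coincident $k_i$'s, or two coincident $l_i$'s) are strictly dominated by the three main cases via analogous direct counts. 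Assembling all contributions then yields the claimed bound. The hardest part, I expect, will be Case C: tracking the divisor loss carefully so as to preserve the $K^\epsilon$ factor, and verifying cleanly that the hypothesis $\beta_1 \ge \frac12$ is exactly what makes \eqref{condition 3'} non-binding beyond what \eqref{condition 1'}--\eqref{condition 2'} already impose --- this is what allows the generic count to match the first target term without further loss.
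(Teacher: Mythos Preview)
Your proposal is correct and follows the same overall strategy as the paper: bound $E_4(\beta_1,\beta_2)^4$ by the number of solutions to System~$(*)$ with $n=2$, then count using only \eqref{condition 1'}--\eqref{condition 2'} and the divisor bound. Two differences in execution are worth flagging.

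First, the paper simply \emph{discards} \eqref{condition 3'} from the outset --- dropping a constraint can only enlarge the count, so no justification is needed. Your Taylor-expansion verification that \eqref{condition 3'} is automatic once $\beta_1\ge\tfrac12$ is correct and more illuminating (it explains why the localization conditions cost ``negligible loss''), but it is not required for the upper bound.

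Second, in the generic case the paper anchors at $(k_4,l_3)$ and subtracts $k_4$ times \eqref{condition 1'} from \eqref{condition 2'} to obtain
\[
(l_1-l_3)\Delta k_1+(l_2-l_3)\Delta k_2 \;=\; l_3\,(\Delta k_3-\Delta k_1-\Delta k_2),
\]
so that once the four small quantities $\Delta k_1,\Delta k_2,l_1-l_3,l_2-l_3$ (and $k_4$) are fixed, both $l_3$ and $\Delta k_3$ are pinned down to $O(K^\epsilon)$ choices as divisors of the left side. Your Case~C reaches the same count, but the phrase ``the divisor bound absorbs the $1/L$ divisibility loss'' hides a necessary step: the integer $k_2'l_2 = k_3'l_3+k_4'l_4$ that you extract from \eqref{condition 2'} still depends on $l_1$, so one must first reduce modulo $l_2$ (via $l_1\equiv -l_2'\pmod{l_2}$) to get the clean constraint $l_2\mid (k_3'l_4'+k_4'l_3')$, whose right side is now independent of $l_1$ and nonzero generically. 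Only then does the divisor bound restrict $l_2$ to $O(K^\epsilon)$ values. The paper's rewriting exposes this product structure immediately, and its nested case split (nonzero/zero for two successive products) also handles systematically the ``intermediate degenerate configurations'' you leave to analogous direct counts.
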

Before we start the proof, we show that Lemma \ref{Lemma number of solutions q=4}, in combination with Lemma \ref{after small cap decoupling}, gives us an essentially sharp bound on $G_4$, namely, the following corollary. 
\begin{corollary} \label{G_4}
\[
G_4 \lesssim_\epsilon K^{\frac{1}{2}+\epsilon}L^{\frac{1}{2}}.
\]    
\end{corollary}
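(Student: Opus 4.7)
The plan is to apply Lemma~\ref{after small cap decoupling} at $q=4$, bound $E_4$ using Lemma~\ref{Lemma number of solutions q=4}, and make a single well-chosen pair $(\beta_1,\beta_2)$. Raising $D_{\beta_1,\beta_2,4}$ to the fourth power gives $D_{\beta_1,\beta_2,4}^{4} \lesssim \eta^{-(\beta_1+\beta_2)} + \eta^{-2(\beta_1+\beta_2)+1}$, so combining the two lemmas yields
\[
G_{4}^{4} \lesssim_{\epsilon} \eta^{-\epsilon} K^{1+\epsilon} L \bigl(\eta^{-(\beta_1+\beta_2)} + \eta^{-2(\beta_1+\beta_2)+1}\bigr) \bigl(\eta^{2(\beta_1+\beta_2)}K^{2}L + \eta^{2\beta_1}K^{2} + \eta^{2\beta_2}L^{2}\bigr).
\]
The strategy is to enforce $\beta_1+\beta_2 = 1$, which collapses the decoupling factor to $\eta^{-1}$, and simultaneously align the two off-diagonal contributions $\eta^{2\beta_1}K^{2}$ and $\eta^{2\beta_2}L^{2}$ so that both coincide with the main piece.

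The key step is to set $c := \log(K/L)/\log(1/\eta)$, which lies in $[0,1]$ by the hypothesis $L < K \le 1/\eta$, and take $\beta_1 = (1+c)/2$, $\beta_2 = (1-c)/2$. Since $\eta^{c} = L/K$, a direct substitution shows that $\eta^{2\beta_1}K^{2} = \eta^{2\beta_2}L^{2} = \eta KL$, while the main term is $\eta^{2}K^{2}L \le \eta KL$ (using $\eta K \le 1$). The second bracket is therefore $\lesssim \eta KL$, and multiplying by $\eta^{-1}$ from the decoupling factor together with the prefactor $K^{1+\epsilon}L$ gives $G_{4}^{4} \lesssim_{\epsilon} \eta^{-\epsilon}K^{2+\epsilon}L^{2}$. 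The basic hypothesis $\eta \ge 1/(KL)$ absorbs $\eta^{-\epsilon}$ into the $K^{\epsilon}$ loss, yielding the desired bound.

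The side conditions are all routine: $\beta_1 \in [1/2,1]$ and $\beta_2 \in [0,1/2]$ follow from $c \in [0,1]$, while the decoupling-plate constraints $\eta^{\beta_1}K \ge 1$ and $\eta^{\beta_2}L \ge 1$ both simplify to $\sqrt{\eta KL} \ge 1$, i.e., the basic hypothesis. I expect the main conceptual step to be locating this balance point: the two off-diagonal pieces in Lemma~\ref{Lemma number of solutions q=4} have intrinsically different shapes and only coincide when $\beta_1 - \beta_2$ is tuned to $\log(K/L)/\log(1/\eta)$. Once this coupling is identified, the rest of the argument is a direct substitution.
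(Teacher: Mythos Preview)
Your proof is correct and takes essentially the same approach as the paper: both set $\beta_1+\beta_2=1$ and then balance $\eta^{\beta_1}K=\eta^{\beta_2}L$, with your logarithmic parameter $c=\log(K/L)/\log(1/\eta)$ being an equivalent way to write the paper's solution $\eta^{\beta_1}=\eta^{1/2}(L/K)^{1/2}$. The only cosmetic difference is that you work with $G_4^4$ throughout while the paper works with $G_4$, but the substance of the argument and the verification of the side conditions are identical.
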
 
\begin{proof}[Proof of Corollary \ref{G_4}]
We insert \eqref{number of solutions q=4} into the bound on $G_4$ \eqref{variance 3}, and let $\beta_1+\beta_2=1$. Then
\begin{equation}  \label{G4 bound}
G_4\lesssim_\epsilon K^\epsilon\Big[ \eta^{\frac{1}{4}}K^{\frac{3}{4}}L^{\frac{1}{2}}+\eta^{-\frac{1}{4}}  K^{\frac{1}{4}+}L^{\frac{1}{4}}(\eta^{\beta_1}K+\eta^{\beta_2}L)^{\frac{1}{2}} \Big].    
\end{equation}
We choose $\beta_1,\beta_2$ by setting 
\begin{equation}  \label{definition of betas in q=4}
\eta^{\beta_1}K=\eta^{\beta_2}L.    
\end{equation}
The equation \eqref{definition of betas in q=4} determines $\beta_1,\beta_2$, since we also have $\beta_1+\beta_2=1$. We can solve these two equations to get
\begin{equation} \label{solution of beta1}
\eta^{\beta_1}=\eta^{\frac{1}{2}} \Big(\frac{L}{K}\Big)^{\frac{1}{2}}.
\end{equation}
We recall from \eqref{relations between L,K,eta} that
\[
L\le K, 0<\eta<1.
\]
Hence we can conclude from \eqref{solution of beta1} that 
\[
\beta_1 \ge \frac{1}{2}. 
\]
In addition,  it follows from \eqref{solution of beta1} that 
\begin{equation}  \label{check of parameters in q=4}
\eta^{\beta_1}K=\eta^{\beta_2}L= (\eta KL)^\frac{1}{2}\ge 1,    
\end{equation}
which is what we expect in \eqref{conditions on beta1,2}.
Finally, we insert \eqref{check of parameters in q=4} back into \eqref{G4 bound} and get 
\[
G_4\lesssim_\epsilon K^\epsilon ( \eta^{\frac{1}{4}} K^{\frac{3}{4}}L^{\frac{1}{2}}+K^{\frac{1}{2}}L^{\frac{1}{2}}) \lesssim K^{\frac{1}{2}+\epsilon}L^{\frac{1}{2}},
\]
where the last inequality follows from $\eta K\le  1$. 
\end{proof}

We return to the proof of Lemma \ref{Lemma number of solutions q=4}:
\begin{proof} [Proof of Lemma \ref{Lemma number of solutions q=4}]
We follow Bourgain and Watt's idea in \cite[Proposition 7]{BourgainWatt1st}. In this case, $n=\frac{q}{2}=2$, and $E_4(\beta_1,\beta_2)^4$ is bounded by the number of solutions of the System ($*$) constrained by \eqref{condition 1'} to \eqref{condition 6'}. First, we disregard the third inequality \eqref{condition 3'} and focus on the two algebraic equations \eqref{condition 1'}, \eqref{condition 2'}. This move turns out to cost us negligible loss, which shows the power of the localization conditions \eqref{condition 5'}, \eqref{condition 6'}. 

The first equation \eqref{condition 1'} tells us that $l_4$ is determined once we choose $l_1,l_2,l_3$. If we set $\Delta k_i=k_i-k_4$ for $i=1,2,3$, then $|\Delta k_i|\lesssim \eta^{\beta_1}K$ by \eqref{condition 5'}, indicating that 
$\Delta k_i$  varies in a relatively small range.  
There are $\sim K$ many choices of the variable $k_4$. Each $k_i$ with $i=\{1,2,3\}$, is determined uniquely by $k_4$ and $\Delta k_i$. Thus it suffices to consider how many $\Delta k_1, \Delta k_2,\Delta k_3$ we can take when $k_4$ is fixed. 

Subtracting $\eqref{condition 1'}\times k_4$ 
from $\eqref{condition 2'}$, we obtain 
\begin{equation}
 l_1\Delta k_1 + l_2\Delta k_2= l_3\Delta k_3\,,  
\end{equation}
which implies 
\begin{equation}     \label{variance of conditions 1}
 (l_1-l_3)\Delta k_1+(l_2-l_3)\Delta k_2=l_3(\Delta k_3-\Delta k_1-\Delta k_2).
\end{equation}

The main observation we will use is that a certain nonzero integer has only a few factors, namely,
\[
d(v)\lesssim_\epsilon v^\epsilon \quad \text{ for all } v\ge 1,
\]
where $\epsilon>0$ is any small number. If we can set up an equation about integers, and both sides are nonzero, then the number of divisors of one side can not exceed the number of divisors of the other side. For example, in \eqref{variance of conditions 1}, if both sides are nonzero and the left hand side is given, then we do not have many choices of $l_3$ and $\Delta k_3-\Delta k_1-\Delta k_2$. This simple observation will be used repeatedly in the following proof, and it is also useful for dealing with $E_{2n}(\beta_1,\beta_2)^{2n}$, where $n\ge 3$. \\

We examine \eqref{variance of conditions 1} and distinguish several cases: \\

\underline{Case (a)}: Both sides are not zero, which is equivalent to $\Delta k_3 \neq \Delta k_1+\Delta k_2$. For a given vector $(\Delta k_1, \Delta k_2, l_1-l_3,l_2-l_3)$, there are $O(K^\epsilon)$ many choices of  $l_3$ and $\Delta k_3$, because $K\ge L$ and 
$l_3$ and $\Delta k_3-\Delta k_1-\Delta k_2$ are divisors of the given left side of (\ref{variance of conditions 1}).
Taking into account the number of ways to choose $\Delta k_1, \Delta k_2, l_1-l_3,l_2-l_3, k_4$, we conclude that in this case there are
\[
\lesssim_\epsilon K^\epsilon (\eta^{\beta_1}K)^2 (\eta^{\beta_2}L)^2 K=\eta^{2(\beta_1+\beta_2)}K^{3+\epsilon}L^2
\]
different solutions of the System ($*$). \\

\underline{Case (b)}: Both sides of \eqref{variance of conditions 1} are zero, which implies that $\Delta k_3=\Delta k_1+\Delta k_2$. Therefore, $\Delta k_3$ is determined once $\Delta k_1, \Delta k_2$ are given. In this case, \eqref{variance of conditions 1} implies that 
\begin{equation}  \label{variance of conditions 2}
(l_1-l_3)\Delta k_1=(l_3-l_2)\Delta k_2,   
\end{equation}
and we further consider two sub-cases as follows. \\

{\bf Sub-case} (b.i): Both sides of \eqref{variance of conditions 2} are nonzero. If $l_1$, $l_1-l_3,\Delta k_1$ are constant, then  $l_2,l_3,\Delta k_2$ are essentially determined as we did in Case a).  In this sub-case, we can select
\[
\Delta k_1,l_1-l_3, l_1, k_4, 
\]
without any restriction. That leads to
\[
\lesssim_\epsilon \eta^{\beta_1+\beta_2} K^{2+\epsilon}L^2
\]
different solutions of the System ($*$). \\

{\bf Sub-case} (b.ii): Both sides of \eqref{variance of conditions 2} are zero. We consider the following three sub-sub-cases.\\

Sub-sub-case (b.ii.1): $\Delta k_1=l_3-l_2=0$ or $\Delta k_2=l_1-l_3=0$.

By symmetry, the analysis for these two scenarios is the same, and the total number of solutions is 
\[
\lesssim \eta^{\beta_1+\beta_2} K^2 L^2.
\]

Sub-sub-case (b.ii.2): $\Delta k_1=\Delta k_2=0$, there are 
\[
\lesssim \eta^{2\beta_2}KL^3
\]
distinct solutions.

Sub-sub-case (b.ii.3): $l_1-l_3=l_2-l_3=0$. There are 
\[
\lesssim \eta^{2\beta_1}K^3 L
\]
distinct solutions. 

Having taken all possible cases into consideration, we can summarize that, after simplification, there are 
\[
\lesssim_\epsilon K^{1+\epsilon}L \Big( \eta^{2(\beta_1+\beta_2)}K^2 L+\eta^{2\beta_1}K^2+\eta^{2\beta_2}L^2 \Big)  
\]
many solutions of the System ($*$) at $q=4$.
\end{proof}

\quad

At this stage, we have all the tools we need. To prove Proposition \ref{main proposition statment}, we still resort to Lemma \eqref{after small cap decoupling}, but unlike the case when $q=4$, we cannot reduce the estimate of a general $E_q(\beta_1,\beta_2)$ to a counting problem. However, there is a connection between $E_4(\beta_1,\beta_2)$ and $E_q(\beta_1,\beta_2)$.

\begin{lemma}   \label{lemma E4 and Eq}
If \eqref{conditions on beta1,2} holds, then for $q\ge 4$, 
\begin{equation}  \label{E4 and Eq}
E_q (\beta_1,\beta_2) \lesssim (\eta^{\beta_1+\beta_2}KL)^{1-\frac{4}{q}} E_{4}(\beta_1,\beta_2)^\frac{4}{q}.    
\end{equation}
\end{lemma}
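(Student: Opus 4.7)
The plan is to prove Lemma \ref{lemma E4 and Eq} by a simple three-norm interpolation applied plate by plate, then summing. For each plate $\sigma$, denote
\[
S_\sigma(x_1,x_2,x_3)=\sum_{(k,l)\in \mathcal R_\sigma} a_{kl}\, e\bigl(lx_1+klx_2+l\sqrt{k}\,x_3\bigr),
\]
so that $E_q(\beta_1,\beta_2)^q = \sum_\sigma \|S_\sigma\|_{L^q_\#(B)}^q$ on the box $B=B\bigl(1,1,\tfrac{1}{\eta L\sqrt{K}}\bigr)$.

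The first step is the pointwise interpolation inequality
\[
\int_B |S_\sigma|^q \;\le\; \|S_\sigma\|_\infty^{q-4}\int_B |S_\sigma|^4.
\]
Dividing by $|B|$ and using that the $L^q_\#$-norm is defined via the normalized measure, this becomes
\[
\|S_\sigma\|_{L^q_\#(B)}^q \;\le\; \|S_\sigma\|_\infty^{q-4}\,\|S_\sigma\|_{L^4_\#(B)}^4.
\]

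The second step is the trivial $L^\infty$-bound $\|S_\sigma\|_\infty \le |\mathcal R_\sigma|$, which uses only $|a_{kl}|\le 1$. By Lemma \ref{R-sigma} the cardinality satisfies $|\mathcal R_\sigma|\lesssim (1+\eta^{\beta_2}K)(1+\eta^{\beta_1}L)$, and under the hypothesis \eqref{conditions on beta1,2} each factor is comparable to its dominant term, giving $|\mathcal R_\sigma|\lesssim \eta^{\beta_1+\beta_2}KL$. Thus
\[
\|S_\sigma\|_{L^q_\#(B)}^q \;\lesssim\; \bigl(\eta^{\beta_1+\beta_2}KL\bigr)^{q-4}\,\|S_\sigma\|_{L^4_\#(B)}^4.
\]

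Finally, summing over $\sigma$ and invoking the definition of $E_4(\beta_1,\beta_2)$ yields
\[
E_q(\beta_1,\beta_2)^q \;\lesssim\; \bigl(\eta^{\beta_1+\beta_2}KL\bigr)^{q-4}\, E_4(\beta_1,\beta_2)^{4},
\]
and taking $q$-th roots gives \eqref{E4 and Eq}. There is no substantive obstacle here; the only point requiring the hypothesis \eqref{conditions on beta1,2} is the absorption of the ``$+1$'' terms in Lemma \ref{R-sigma} so that $|\mathcal R_\sigma|$ behaves like the expected product $\eta^{\beta_1+\beta_2}KL$ uniformly over $\sigma$. The content of the lemma is therefore the observation that, once the first spacing problem has been localized to plates, one can trade the higher $L^q$ exponent against the sharp $L^4$ estimate at the cost of a uniform $L^\infty$ factor on each plate.
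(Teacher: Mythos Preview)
Your proof is correct and follows essentially the same approach as the paper: bound $\|S_\sigma\|_\infty$ by $|\mathcal R_\sigma|\lesssim \eta^{\beta_1+\beta_2}KL$ via Lemma \ref{R-sigma} and hypothesis \eqref{conditions on beta1,2}, interpolate $\|S_\sigma\|_{L^q_\#}^q\le \|S_\sigma\|_\infty^{q-4}\|S_\sigma\|_{L^4_\#}^4$, sum over $\sigma$, and take $q$-th roots.
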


\begin{proof}
By Lemma \ref{R-sigma} and \eqref{conditions on beta1,2}, for each $\sigma$,
\[
\Big\|\sum_{(k,l)\in \mathcal R_\sigma}a_{kl}e(lx_1+klx_2+l\sqrt{k}x_3)\Big\|_{L^\infty}
\le  |I_\sigma||J_\sigma| \lesssim \eta^{\beta_1+\beta_2}KL. 
\]
Accordingly,
\begin{equation}   \label{q and 4}
\begin{split}
&\Big\|\sum_{(k,l)\in \mathcal R_\sigma} a_{kl}e\Big(lx_1+klx_2+l\sqrt{k}x_3\Big)\Big\|^q_{L^q_{\#}\Big(B(1,1,\frac{1}{\eta L \sqrt{K}})\Big)} 
\\ \le & \Big\|\sum_{(k,l)\in \mathcal R_\sigma}a_{kl}e(lx_1+klx_2+l\sqrt{k}x_3)\Big\|_{L^\infty}^{q-4}
\\ \times & \Big\|\sum_{(k,l)\in \mathcal R_\sigma} a_{kl}e\Big(lx_1+klx_2+l\sqrt{k}x_3\Big)\Big\|^4_{L^4_{\#}\Big(B(1,1,\frac{1}{\eta L \sqrt{K}})\Big)}
\\ \lesssim & (\eta^{\beta_1+\beta_2}KL)^{q-4} 
\\ \times & \Big\|\sum_{(k,l)\in \mathcal R_\sigma} a_{kl}e\Big(lx_1+klx_2+l\sqrt{k}x_3\Big)\Big\|^4_{L^4_{\#}\Big(B(1,1,\frac{1}{\eta L \sqrt{K}})\Big)}.
\end{split}    
\end{equation}
Thence, by comparing the definition of $E_q(\beta_1,\beta_2)$, \eqref{definition of Eq} and \eqref{q and 4}, we find that
\begin{equation}  \label{E44 and Eqq}
E_q(\beta_1,\beta_2)^q\lesssim (\eta^{\beta_1+\beta_2}KL)^{q-4} E_4(\beta_1,\beta_2)^4.   \end{equation}
The inequality \eqref{E4 and Eq} follows by taking the $\frac{1}{q}$-th power on both sides of \eqref{E44 and Eqq}.
\end{proof}

Now comes the proof of our main Proposition.

\begin{proof}  [Proof of Proposition \ref{main proposition statment}]
We assume the validity of \eqref{conditions on beta1,2} and will verify it later. By Lemma \ref{Lemma number of solutions q=4} and Lemma \ref{lemma E4 and Eq}, $E_q(\beta_1,\beta_2)$ can be bounded as   
\begin{equation}  \label{upper bound on Eq}
\begin{split}
E_q(\beta_1,\beta_2)\lesssim_\epsilon K^\epsilon &(\eta^{\beta_1+\beta_2}KL)^{1-\frac{4}{q}} (KL)^{\frac{1}{q}} 
\\ \times & (\eta^{2(\beta_1+\beta_2)}K^2L +\eta^{2\beta_1}K^2+\eta^{2\beta_2}L^2)^{\frac{1}{q}}. 
\end{split}
\end{equation}
We insert \eqref{upper bound on Eq} into \eqref{variance 3}, and deduce that
\begin{equation}   \label{q>4 before optimization}
\begin{split}
G_q\lesssim_\epsilon  \eta^{-\epsilon} &\Big[\eta^{-(\beta_1+\beta_2)(\frac{1}{2}-\frac{1}{q})}+\eta^{-(\beta_1+\beta_2)(1-\frac{2}{q})+\frac{1}{q}}\Big] (\eta^{\beta_1+\beta_2}KL)^{1-\frac{4}{q}} 
\\  & \Big[  KL \Big(\eta^{2(\beta_1+\beta_2)}K^2L+\eta^{2\beta_1}K^2+\eta^{2\beta_2}L^2\Big)  \Big]^{\frac{1}{q}}.  
\end{split}
\end{equation}
Notice that in \eqref{q>4 before optimization}, we hide $K^\epsilon$ in $\eta^{-\epsilon}$ since $\frac{1}{\eta}\ge K$. Next we choose the parameters $\beta_1,\beta_2$ in order to optimize the upper bound in \eqref{q>4 before optimization}. For simplification of notation, if we let $\beta_1+\beta_2=\beta$, \eqref{q>4 before optimization} becomes 
\begin{equation}   \label{idk what to label}
\begin{split}
G_q \lesssim_\epsilon  \eta^{-\epsilon} & \Big[\eta^{-\beta(\frac{1}{2}-\frac{1}{q})}+\eta^{-\beta(1-\frac{2}{q})+\frac{1}{q}}\Big] (\eta^{\beta}KL)^{1-\frac{4}{q}}   
\\ &\Big[KL(\eta^{2\beta}K^2L+\eta^{2\beta_1}K^2+\eta^{2\beta_2}L^2)\Big]^{\frac{1}{q}}.
\end{split}    
\end{equation}
If we set
\begin{equation}  \label{choices of beta1 beta2 q}
\eta^{\beta_1}K=\eta^{\beta_2}L=(\eta^\beta KL)^{\frac{1}{2}},   
\end{equation}
then \eqref{idk what to label} takes the form 
\[
G_q \lesssim_\epsilon \eta^{-\epsilon} (KL)^{1-\frac{2}{q}} \Big[  \eta^{\beta(\frac{1}{2}-\frac{2}{q})}+ \eta^{\frac{1}{q}(1-\beta)}  \Big](1+\eta^\beta K)^{\frac{1}{q}}.
\]
Finally, if we set 
\begin{equation}   \label{q>4 beta value}
\beta=\frac{2}{q-2} \le 1,
\end{equation}
we have 
\[
G_q \lesssim_\epsilon \eta^{-\epsilon} \eta^{\frac{q-4}{q(q-2)}} (KL)^{1-\frac{2}{q}}(1+\eta^{\frac{2}{q-2}}K)^{\frac{1}{q}}. 
\]
This is \eqref{main proposition}. 

It remains to verify the conditions \eqref{beta1 da yu 1/2} and \eqref{conditions on beta1,2}. On one hand, we do not know what $q$ to choose at this step, so the values of $\beta$ and $\beta_1$ are unknown, and that is why we leave \eqref{beta1 da yu 1/2} as an assumption. By \eqref{choices of beta1 beta2 q} and the fact that $0<\eta<1$, it is easy to see that \eqref{beta1 da yu 1/2} is equivalent to the assumption \eqref{main proposition assumption} in Proposition \ref{main proposition statment}. 
On the other hand, by \eqref{relations between L,K,eta}, 
\[
1\le \frac{1}{\eta} \le KL,
\]
and since $\beta \le 1$, we find that
\[
\eta^\beta KL \ge \eta KL \ge 1.
\]
Therefore
\[
\eta^{\beta_1}K=\eta^{\beta_2}L=(\eta^\beta KL)^{\frac{1}{2}}\ge 1,
\]
which is exactly \eqref{conditions on beta1,2}. 

We have completed our proof of the main Proposition \ref{main proposition statment}.\\
\end{proof}

\section{Bounds on certain exponential sum using the Bombieri-Iwaniec Method}\label{bounds on exponential sum}

In this section, we derive an effective upper bound for a certain type of double exponential sum, which appears in the study of Gauss's Circle Problem and Dirichlet's Divisor Problem. \\

\subsection{Main theorem on bounds of the exponential sum}
Let us begin with some definitions and assumptions. Let $\epsilon>0$ and $C_1,C_2,\dots,C_5\ge 2$ be real constants. Let $F(x)$ be a real function that is three times continuously differentiable for $1\le x\le 2$, and let $g(x),G(x)$ be functions of bounded variation on the interval $[1,2]$. $M$ and $T$ denote large positive parameters and $H\ge 1$.  We set  
\begin{equation}  \label{definition of S}
S :=\sum_{H\le h\le 2H} g\Big(\frac{h}{H}\Big) \sum_{M\le m\le 2M} G\Big(\frac{m}{M}\Big)e\Big(\frac{hT}{M}F\Big(\frac{m}{M}\Big)\Big),
\end{equation}
which is the standard form of the exponential sum encountered in the study of both the Circle and Divisor Problems. \\

Suppose moreover that, on the interval $[1,2]$, the derivatives $F^{(1)}(x)$, \\ $F^{(2)}(x)$,$F^{(3)}(x)$ satisfy:
\begin{equation}  \label{condition on F 1}
C_r\ge |F^{(r)}(x)|\ge C_{r}^{-1}  \quad (r=1,2,3)    
\end{equation}
and  
\begin{equation}  \label{condition on F 2}
|F^{(1)}(x)F^{(3)}(x)-3F^{(2)}(x)^2|\ge C_{4}^{-1},     
\end{equation}
for some constant $C_4$.

We focus on the following two cases. 

\begin{definition}  [Case (A)] \label{case A definition}
Let $H,M,T$ satisfy the three conditions: 
\begin{equation}  \label{case A}
\begin{cases}
H\ge M^{-9}T^{4}(\log T)^{\frac{171}{140}}  \quad  &\text{ if }M<T^{-\frac{7}{16}},
\\ H\ge M^{11}T^{-6}(\log T)^{\frac{171}{140}}  \quad &\text{ if }M>T^{\frac{9}{16}}, 
\\ H \le MT^{-\frac{49}{164}}.    &
\end{cases}    
\end{equation}    
\end{definition}  

\begin{definition}  [Case (B)]  \label{case B definition}
Let $H,M,T$ satisfy the two conditions:
\begin{equation}   \label{case B}
\begin{cases}
M \le  C_5 T^{\frac{1}{2}},  &
\\ H \le  \min\{M^{\frac{35}{69}}T^{-\frac{2}{23}}, B_0 M^{\frac{3}{2}}T^{-\frac{1}{2}}\}, &
\end{cases}    
\end{equation}
in which $B_0$ is a positive constant depending on $C_1,...,C_5$.   
\end{definition}
If we are in one of these two cases, then Huxley's result \cite{Huxley03} on the second spacing problem can be applied. That leads to the following lemma:  
\begin{lemma}   \label{main lemma-111} 
If we are in Case (A) or Case (B), and we suppose that 
\begin{equation}  \label{condition 1---'}
N^{6-q}\gg H^{2q-6} \Big(\frac{M^3}{T} \Big)^{4-q},    
\end{equation}
then
\begin{equation}   \label{upper bound middle form}
\begin{split}
S \lesssim_\epsilon T^\epsilon & \frac{M^{\frac{5}{2}}}{T^{\frac{1}{2}}} \Big(\frac{H^2 T}{M^3}\Big)^{\frac{11}{17q}} \Big(\frac{TH}{M^3}\Big)^{1-\frac{2}{q}-\frac{q-4}{q(q-2)}}  
\\ \times &   N^{\frac{1}{2}-\frac{57}{17q}-\frac{2(q-4)}{q(q-2)}}\Big(1+\Big(\frac{M^3}{HT}\Big)^{\frac{2}{q-2}}\frac{T^{\frac{1}{2}}}{M^{\frac{3}{2}}} N^{\frac{3}{2}-\frac{4}{q-2}}\Big)^{\frac{1}{q}}, 
\end{split}
\end{equation}
where $N$ is defined by
\begin{equation}  \label{definition of N}
N\sim  \begin{cases}
H (\frac{M}{H})^{\frac{41}{25}} T^{-\frac{49}{100}} (\log T)^{\frac{969}{14000}} \quad   & \text{ in case (A)}, 
\\ \min\Big\{ \frac{M^{\frac{7}{8}}(\log T)^{\frac{969}{5600}}}{T^{\frac{3}{20}}H^{\frac{29}{40}}}, \frac{M^2}{H^{\frac{1}{3}}T^{\frac{2}{3}}}  \Big\}  \quad & \text{ in case (B)}.
\end{cases}    
\end{equation} 
\end{lemma}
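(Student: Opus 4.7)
The plan is to follow the Bombieri-Iwaniec framework sketched in the introduction: dissect $S$ into short exponential sums, linearize each via Taylor expansion, approximate the linear coefficient by a reduced fraction $a/r$ through Dirichlet's theorem, and apply Poisson summation. The novel ingredient is to insert Proposition \ref{main proposition statment} on the first spacing side at an exponent $q$ slightly above $4$, while on the second spacing side we quote Huxley's estimates from \cite{Huxley03}, which are sharp precisely in the $(H,M,T)$ ranges described in Case (A) and Case (B).

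First I would partition $[M, 2M]$ into $\asymp M/N$ blocks of length $N$ and on each block approximate $(hT/M)F(m/M)$ by its cubic Taylor polynomial about the block centre. The van der Corput $B$-process then converts each short Weyl sum into a new exponential sum whose phase is governed by a vector $\vec{y}_{(k,l)}=(l,kl,l\sqrt{k},l/\sqrt{k})$ tested against a vector $\vec{x}_{a/r}$ depending on $(a/r,h,j)$; the dyadic sizes $k\sim K$ and $l\sim L$ are determined by the saddle-point analysis and depend on $H, M, T, N$. A H\"older inequality with exponent $q$, in the spirit of Bourgain and Watt \cite{BourgainWatt1st}, then factorizes the resulting double sum into the first spacing norm $G_q$ and a second spacing count.

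At this step I would identify the parameters $K, L, \eta$ of Proposition \ref{main proposition statment} with the dyadic sizes emerging from the $B$-process and the tolerance on the smallest phase coordinate, and invoke the proposition; this contributes the factors $\eta^{(q-4)/(q(q-2))}$, $(KL)^{1-2/q}$, and $(1+\eta^{2/(q-2)}K)^{1/q}$ appearing in \eqref{upper bound middle form}. Huxley's second spacing bound from \cite{Huxley03} then contributes the factor $N^{1/2-57/(17q)-2(q-4)/(q(q-2))}$ with $N$ taking the form \eqref{definition of N}: the three constraints of \eqref{case A} and the two of \eqref{case B} are precisely the regimes where Huxley's count takes this clean shape, and the $(\log T)$ powers inside $N$ are the logarithmic losses inherent to his argument. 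Multiplying the two contributions, trivially summing over the major/minor arcs and over $h\sim H$, and tracking the normalisation prefactors $M^{5/2}/T^{1/2}$ and $(H^2T/M^3)^{11/(17q)}$ should yield \eqref{upper bound middle form}.

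The main obstacle I anticipate is the careful bookkeeping needed to match condition \eqref{condition 1---'} on $N$ with the hypothesis \eqref{main proposition assumption} of Proposition \ref{main proposition statment}, so that $q$ may legitimately be taken slightly above $4$. Condition \eqref{condition 1---'} should turn out to be precisely the balance under which the decoupling gain provided by Proposition \ref{main proposition statment} beats the trivial diagonal contribution in the large sieve, so that the improvement over Huxley's original $q=4$ analysis is strict; verifying this balance, together with checking that the auxiliary factor $(1+\eta^{2/(q-2)}K)^{1/q}$ behaves as advertised in each of Cases (A) and (B), is where the bulk of the technical work will lie.
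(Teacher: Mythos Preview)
Your overall strategy---Bombieri--Iwaniec dissection, Poisson summation, H\"older at exponent $q$, Proposition~\ref{main proposition statment} on the first spacing side, Huxley's second spacing bounds on the other---is exactly what underlies the lemma. However, the paper does \emph{not} reprove any of that machinery here. All of the dissection, Poisson summation, large sieve/H\"older, and Huxley's second spacing input are already packaged into a single inequality quoted from Bourgain--Watt \cite[(5.22)]{BourgainWatt2nd} (Lemma~\ref{BW 5.22} in the paper):
\[
S\lesssim_\epsilon T^\epsilon \max_{R\le Q\lesssim Q_2}\Big(\frac{R}{Q}\Big)^{3-\frac{6}{q}}\Big(\frac{MR}{N}\Big)\Big(\frac{H}{R}\Big)^{\frac{22}{17q}}G_q,
\]
with the parameter dictionary $R\sim(M^3/NT)^{1/2}$, $K\sim NQ/R^2$, $L\sim HQ/R^2$, $\eta\sim R^2/(NH)$ already fixed in \eqref{definitions of parameters}. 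So the entire first three paragraphs of your plan are replaced by a citation.

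The actual content of the paper's proof, which your proposal does not mention, is the optimisation in the dyadic parameter $Q$. After inserting Proposition~\ref{main proposition statment} for $G_q$, the $Q$-dependence of the right-hand side is isolated as
\[
Q^{\frac{2}{q}-1}+\eta^{\frac{2}{q(q-2)}}\Big(\frac{N}{R^2}\Big)^{\frac{1}{q}}Q^{\frac{3}{q}-1},
\]
and both exponents are nonpositive for $q\ge 4$, so the maximum over $R\le Q\lesssim Q_2$ is attained at $Q=R$. Substituting $Q=R$ and then $R^2=M^3/(NT)$ yields \eqref{upper bound middle form} directly. Your intuition about condition~\eqref{condition 1---'} is correct: since $L/K\sim H/N$ and $\eta\sim R^2/(NH)$, the hypothesis \eqref{main proposition assumption} of Proposition~\ref{main proposition statment} translates, after substituting $R$, exactly into \eqref{condition 1---'}; there is no further ``balance'' argument needed. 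The factor $(1+\eta^{2/(q-2)}K)^{1/q}$ is simply carried through symbolically and becomes the last factor in \eqref{upper bound middle form} after the same substitutions---no case analysis on (A) versus (B) is performed at this stage.
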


\vspace{0.5cm}

By the definition of $N$ in (\ref{definition of N}), in Case (A), (\ref{condition 1---'}) becomes 
\begin{equation}  \label{condition 1-------}
H^{\frac{2q-6}{6-q}+\frac{16}{25}}M^{\frac{34}{25}}\ll T^{\frac{51}{100}}(\log T)^{\frac{969}{14000}}  \,.  
\end{equation}
The inequality (\ref{upper bound middle form}) can be simplified as 
\begin{equation}  \label{upper bound final form}
\begin{split}
\frac{S}{H} \lesssim_\epsilon T^\epsilon & \Big(\frac{H}{M} \Big)^{-\frac{8}{25}+\frac{36}{25q}+\frac{7(q-4)}{25q(q-2)}} T^{\frac{51}{200}+\frac{29}{100q}-\frac{q-4}{50q(q-2)}}
\\ \times & \Big(1+\Big(\frac{H}{M} \Big)^{\frac{14}{25(q-2)}-\frac{24}{25}} T^{-\frac{1}{25(q-2)}-\frac{47}{200}} \Big)^{\frac{1}{q}} ,
\end{split}   
\end{equation}
where we hide all $\log T$ powers in $T^\epsilon$.\\

\begin{theorem} \label{main theorem}
\quad 
\begin{itemize}
\item  In Case (A),   (\ref{condition 1-------}) yields  (\ref{upper bound final form}). 
\item  In Case (B), if (\ref{range of q}) and 
\begin{equation}   \label{condition for reduction from B to A}
M^{-\frac{27}{23}}T^{\frac{53}{92}}<H<M^{-9}T^4 (\log T)^{\frac{171}{140}}  \,
\end{equation}
hold, then (\ref{condition 1-------}) implies  (\ref{upper bound final form}) as well. 
\end{itemize}
\end{theorem}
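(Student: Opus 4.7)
To prove Theorem \ref{main theorem} I would treat the two cases in turn, using Lemma \ref{main lemma-111} in each and then unwinding the definition of $N$ from (\ref{definition of N}).

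\textbf{Case (A).} Here the proof is pure exponent bookkeeping. The paragraph preceding the theorem already explains that inserting the Case (A) expression $N \sim H(M/H)^{41/25}T^{-49/100}(\log T)^{969/14000}$ into the abstract hypothesis (\ref{condition 1---'}) of Lemma \ref{main lemma-111} yields the concrete condition (\ref{condition 1-------}). Hence under (\ref{condition 1-------}) one may invoke Lemma \ref{main lemma-111}, obtaining (\ref{upper bound middle form}) with this specific $N$. The remaining step is to substitute the same $N$ into the right-hand side of (\ref{upper bound middle form}) and collect the resulting powers of $H$, $M$, and $T$. A convenient sanity check is that the $H$- and $M$-exponents of $S/H$ sum to zero, so the two combine into a single factor $(H/M)^{\gamma(q)}$; the $T$-exponent and the bracketed factor then match (\ref{upper bound final form}) term by term.

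\textbf{Case (B).} The main idea is to reduce to Case (A) by comparing the two available values of $N$. Write $N_A$ for the Case (A) expression and $N_{(B)} = \min\{N_1, N_2\}$ for the Case (B) expression from (\ref{definition of N}), where, ignoring log factors, $N_1 = M^{7/8}H^{-29/40}T^{-3/20}$ and $N_2 = M^2 H^{-1/3}T^{-2/3}$. A direct computation shows that $N_1 \gtrsim N_A$ is algebraically equivalent to $H \lesssim M^{-9}T^{4}$, while $N_2 \gtrsim N_A$ is equivalent to $H \gtrsim M^{-27/23}T^{53/92}$; these are precisely the two bounds in (\ref{condition for reduction from B to A}). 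Hence under (\ref{condition for reduction from B to A}) one has $N_{(B)} \gtrsim N_A$. Two consequences follow. First, (\ref{condition 1-------}) is equivalent to (\ref{condition 1---'}) holding for $N = N_A$, and $N_{(B)} \gtrsim N_A$ upgrades this to (\ref{condition 1---'}) for $N = N_{(B)}$, so Lemma \ref{main lemma-111} applies in Case (B). Second, the two $N$-exponents $\tfrac{1}{2} - \tfrac{57}{17q} - \tfrac{2(q-4)}{q(q-2)}$ and $\tfrac{3}{2} - \tfrac{4}{q-2}$ appearing in (\ref{upper bound middle form}) are both negative throughout the range (\ref{range of q}), so $N_{(B)} \gtrsim N_A$ forces the Case (B) right-hand side to be dominated by the analogous expression with $N_A$ in its place, and this in turn simplifies to (\ref{upper bound final form}) by the Case (A) calculation.

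\textbf{Main obstacle.} Case (A) is routine once the exponent tracking is organized carefully. The crux is Case (B): recognizing that the specific numerical exponents $-9$, $4$, $-27/23$, and $53/92$ in (\ref{condition for reduction from B to A}) are exactly what is forced by unwinding $N_1 \gtrsim N_A$ and $N_2 \gtrsim N_A$. Once this is seen, the negativity of the $N$-exponents on the range (\ref{range of q}) makes the reduction automatic and the argument collapses to the Case (A) substitution. The delicate care needed is in tracking the fractional powers and in verifying that the bracketed factor $(1 + \cdots)^{1/q}$ in (\ref{upper bound middle form}) also respects the comparison, since it contains $N$ raised to a negative power as well.
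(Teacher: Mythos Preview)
Your proposal is correct and follows essentially the same route as the paper: substitute $N_A$ in Case (A), and in Case (B) use the numerical equivalences $N_1\gtrsim N_A\Leftrightarrow H\lesssim M^{-9}T^4$ and $N_2\gtrsim N_A\Leftrightarrow H\gtrsim M^{-27/23}T^{53/92}$ together with the negativity of both $N$-exponents on the range (\ref{range of q}) to transfer the hypothesis up to $N_{(B)}$ and the conclusion down to $N_A$. The paper states the inequality $N_B>N_A$ without unwinding it, so your explicit identification of where the exponents $-9,4,-27/23,53/92$ come from is a welcome clarification.
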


\begin{remark}
The key point here is that once \eqref{range of q} and \eqref{condition for reduction from B to A} hold, we have a unified version of the condition \eqref{condition 1-------} and the estimate for $S$ \eqref{upper bound final form} in both Case (A) and Case (B).
\end{remark}

\subsection{Bourgain-Watt's argument}
A detailed account of the Bombieri-Iwaniec method using more Analysis language was given in \cite[Sections 7-12]{BourgainWatt1st}. 
Here we follow the reasoning in \cite[Section 5]{BourgainWatt2nd} and depart from their argument when we invoke the estimate on the first spacing problem. 

Let $H,T,M,N$ be as above in \eqref{definition of S}, \eqref{definition of N}, and let $q$ be the same parameter as we used in Section \ref{first spacing problem}. The other parameters are defined by 
\begin{equation}  \label{definitions of parameters}
\begin{split}
R& \sim \Big(\frac{M^3}{NT}\Big)^{\frac{1}{2}} ,
\\ L & \sim \frac{HQ}{R^2} \ge 1 ,
\\ K & \sim \frac{NQ}{R^2} \ge 1 ,
\\ \eta & \sim \Big(\frac{Q}{R}\Big)^2 (KL)^{-1} \sim \frac{R^2}{NH},
\end{split}    
\end{equation}
and $Q$ lies in the range
\begin{equation}  \label{definition of Q}
 R \le Q\le 3H \le \frac{3}{64C_2} N.    
\end{equation}
The inequality \eqref{definition of Q} also gives us relations between the parameters $R,H,N$. From (\ref{definitions of parameters}),  we see that the parameters $\eta, K$ and $L$ satisfy 
\begin{equation}  \label{L,K,eta}
L\le K\le \frac{1}{\eta}\le KL.    
\end{equation}  \\

Let us sketch Bourgain and Watt's argument in \cite[Section 5]{BourgainWatt2nd}, without presenting some technical details which are illustrated clearly and nicely in \cite[Section 5]{BourgainWatt2nd}. 
Typically, in the study of both the Circle and Divisor Problems,  one encounters the exponential sum \cite[(5.1)]{BourgainWatt2nd}
\[
S=\sum_{H\le h \le H_1} \sum_{M\le m\le M_2}e \Big(\frac{hT}{M}F\Big( \frac{m}{M} \Big) \Big),
\]
where $1\le \frac{H_1}{H}\le 2$ and $1\le \frac{M_1}{M}\le 2$.  The range of $m$ is divided  into intervals $I_j$ of length $N$, so that the long sum over $m$ is divided into short sums $S_j$ over the shorter interval $I_j$. On each $I_j$, the Taylor expansion
can be applied to replace the function $F$ by a quadratic polynomial in $m$.  Via a use of Dirichlet's approximation theorem, the linear coefficient is approximated
by a rational number $\frac{a}{r}$. $I_j$ is uniquely determined by the rational number $a/r$ so that one can also denote 
by $I_{a/r}$. Group those intervals $I_{{a'}/{r'}}$ for which $r'\ge H$ with a ``nearby" interval $I_{\frac{a}{r}}$ for which $r\le \frac{R^2}{H}$.  This process makes the interval  $I_{a/r}$ into a longer one, still denoted by ${I_{a/r}}$, called a major arc. Those $I_{a/r}$ with $\frac{R^2}{H}\le r\le H$ are called minor arcs. The range $\frac{R^2}{H}\le r\le H$ can be shrunk
to  $R\le r\le H$ by Huxley's method.  The treatments for major arcs are relatively easy and the contributions of the double exponential sum from them can be controlled by (first term in \cite[(5.6)]{BourgainWatt2nd})
\[
\frac{MR\log H}{\sqrt{HN}},
\]
which is negligible,  compared with the contributions from the minor arcs. For the original classifications of major and minor arcs, see \cite[Section 3]{HuxleyCircle1}. 
The minor arcs $I_{a/r}$'s with $R\leq r\leq H$ are more difficult to examine.  By the pigeonhole principle, one can assume 
the denominator $r$ lies between $Q$ and $2Q$ for some dyadic number $Q\in [R, H]$, while the numerator 
$a\sim A$ for some dyadic number $A\leq Q$. After applying the Poisson summation formula, we find that the double exponential sum over minor arcs is bounded by \cite[(5.6) and (5.9)]{BourgainWatt2nd}
\[
(\log H)^2 \Big[|\mathcal{C}(A,Q)|\frac{Q}{R}\sqrt{HN}\log N
+\frac{R^2}{Q} \sum_{I_{\frac{a}{r}}\in \mathcal{C}(A,Q)}\Big| \mathop{\sum}\limits_{\substack{L\le l\le 2L \\K\le k\le 2K}} e\Big( \Vec{x}_{\frac{a}{r}}\cdot \Vec{y}_{(k,l)} \Big) \Big|\Big],
\]
where the first term arises from the error terms in the Poisson summation. 
The principal contribution is from the second term, where 
\[
\Vec{y}_{(k,l)}=\Big(k,lk,l\sqrt{k},\frac{l}{\sqrt{k}}\Big)\in \mathbb R^4, 
\]
and $\Vec{x}_{\frac{a}{r}} \in \mathbb R^4$ is a vector depending on $\frac{a}{r}$ that will be specified at the end of this section, when we discuss the second spacing problem.
$\mathcal{C}(A,Q)$ is a subset of $\{\frac{a}{r}:a\sim A, r\sim Q\}$, and it also can be interpreted as a subset of those corresponding intervals or minor arcs. $A=0$ is allowed and $\mathcal{C}(0,Q)$ refers to those intervals 
$I_{\frac{a}{r}}$ which Huxley calls ``bad intervals". The precise definition of ``bad intervals" is given in \cite[Section 2]{HuxleyZeta5} and \cite[Section 2]{Huxley03}.
The purpose of separating out such intervals will be clear if one follows Huxley's work on the second spacing problem.  All $\mathcal{C}(A,Q)$'s  form a partition of minor arcs with $Q\le r\le 2Q$.  \\

For the triple sum in the principal contribution,
\[
\sum_{I_{\frac{a}{r}}\in \mathcal{C}(A,Q)}\Big| \mathop{\sum}\limits_{\substack{L\le l\le 2L \\K\le k\le 2K}} e\Big( \Vec{x}_{\frac{a}{r}}\cdot \Vec{y}_{(k,l)} \Big) \Big|, 
\]
one can bound it by 
\[
|\mathcal{C}(A,Q)|^{1-\frac{2}{q}} (R^{-8}H^4N^2Q^2 V B(A,Q;V))^{\frac{1}{q}}G_q,  
\]
which is \cite[(5.12)]{BourgainWatt2nd}, employing the double large sieve inequality presented in \cite[Sections 5]{BourgainWatt1st}. Here $G_q$, the mean value of certain exponential sum, is defined as in \eqref{original form}, and it is equal to $\sqrt[q]{A_q}$ in \cite{BourgainWatt2nd}; $V$ is a parameter; and $B(A,Q,;V)$ is the quantity appearing in the second spacing problem. $V$ will be chosen to minimize the product $V B(A,Q;V)$. After invoking Huxley's result on the second spacing problem \cite{Huxley03} and discussing many cases, Bourgain and Watt concluded the following lemma. 
\begin{lemma}  \cite[(5.22)]{BourgainWatt2nd}   \label{BW 5.22}
For each $\epsilon>0$, 
\[
S\lesssim_\epsilon T^\epsilon \max_{R\le Q\lesssim Q_2}  \Big(\frac{R}{Q} \Big)^{3-\frac{6}{q}} \Big(\frac{MR}{N} \Big) \Big(\frac{H}{R}\Big)^{\frac{22}{17q}} G_q, 
\]    
where \[
Q_2= R\Big(\frac{H}{R}\Big)^{39/119}\Big(\log (2H/R)\Big)^{-\frac{3}{4}}.
\]
\end{lemma}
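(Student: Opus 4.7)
The theorem has two parts. The Case (A) assertion is a direct exponent computation: substituting the explicit formula $N_A:=H(M/H)^{41/25}T^{-49/100}(\log T)^{969/14000}=M^{41/25}H^{-16/25}T^{-49/100}(\log T)^{969/14000}$ from \eqref{definition of N} into the abstract condition \eqref{condition 1---'} yields \eqref{condition 1-------}, and substituting it into the abstract bound \eqref{upper bound middle form} from Lemma \ref{main lemma-111} yields \eqref{upper bound final form}.

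The core of my plan is the reduction from Case (B) to Case (A) via a monotonicity argument. The key algebraic observation is that the two inequalities in \eqref{condition for reduction from B to A} are precisely the two inequalities $N_A\lesssim N_{B,1}$ and $N_A\lesssim N_{B,2}$, where $N_{B,1}=M^{7/8}H^{-29/40}T^{-3/20}(\log T)^{969/5600}$ and $N_{B,2}=M^2H^{-1/3}T^{-2/3}$ are the two terms in the Case (B) min. Indeed,
\[
\frac{N_A}{N_{B,1}} \;=\; M^{153/200}\,H^{17/200}\,T^{-68/200}\,(\log T)^{-2907/28000},
\]
and raising $N_A\le N_{B,1}$ to the $1/17$ power produces exactly $H\le M^{-9}T^{4}(\log T)^{171/140}$, because $68/17=4$, $153/17=9$, and $2907/(17\cdot 140)=171/140$. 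Similarly
\[
\frac{N_A}{N_{B,2}} \;=\; M^{-9/25}\,H^{-23/75}\,T^{53/300}\,(\log T)^{969/14000},
\]
and $N_A\le N_{B,2}$ reduces to $H\ge M^{-27/23}T^{53/92}$ (using $108/92=27/23$). Hence \eqref{condition for reduction from B to A} is equivalent (up to a logarithmic factor absorbed into $T^\epsilon$) to $N_A\lesssim \min\{N_{B,1},N_{B,2}\}\sim N$, the Case (B) value of $N$.

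Given this, I would conclude by monotonicity. A direct check using \eqref{range of q} shows that for $q\in[4,4.5]$ both the bare exponent $\tfrac12-\tfrac{57}{17q}-\tfrac{2(q-4)}{q(q-2)}$ of $N$ and the interior exponent $\tfrac32-\tfrac{4}{q-2}$ of $N$ in the bracketed factor of \eqref{upper bound middle form} are negative. Consequently replacing $N$ by the smaller $N_A$ only \emph{enlarges} the right-hand side of \eqref{upper bound middle form}, so the Case (B) bound is dominated by the expression one obtains by formally substituting $N_A$ — and that expression is, by the Case (A) computation, exactly \eqref{upper bound final form}. In parallel, since $6-q>0$, the hypothesis \eqref{condition 1-------} (which is \eqref{condition 1---'} with $N_A$) gives $N_A^{6-q}\gg H^{2q-6}(M^3/T)^{4-q}$, and $N\gtrsim N_A$ then transfers this to the genuine Case (B) version of \eqref{condition 1---'}, so Lemma \ref{main lemma-111} is legitimately applicable in Case (B).

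The main obstacle is the algebraic alignment in the second paragraph: one must verify that the somewhat unpleasant rational exponents $41/25$, $16/25$, $29/40$, $7/8$, $49/100$, $3/20$, $1/3$, $2/3$ combine exactly to yield the clean exponents $-9$, $4$, $-27/23$, $53/92$ and the precise logarithmic power $171/140$ appearing in \eqref{condition for reduction from B to A}. This is not a coincidence but is exactly the reason the condition \eqref{condition for reduction from B to A} was formulated the way it was; once this matching is in place, the monotonicity step and the transfer of \eqref{condition 1---'} are immediate.
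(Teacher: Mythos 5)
Your proposal does not prove the statement in question. Lemma~\ref{BW 5.22} is the bound
\[
S\lesssim_\epsilon T^\epsilon \max_{R\le Q\lesssim Q_2}  \Big(\tfrac{R}{Q} \Big)^{3-\frac{6}{q}} \Big(\tfrac{MR}{N} \Big) \Big(\tfrac{H}{R}\Big)^{\frac{22}{17q}} G_q,
\]
which is quoted verbatim as equation (5.22) of Bourgain--Watt \cite{BourgainWatt2nd} and used here as a black-box input: the paper does not reprove it. What you have written instead is a proof of Theorem~\ref{main theorem} --- the Case~(A) exponent substitution, the $N_A$ versus $N_B$ algebraic matching, the reduction from Case~(B) to Case~(A) via monotonicity in $N$, and the transfer of condition \eqref{condition 1---'}. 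That is a different statement, proved later in Section~\ref{bounds on exponential sum}, and it \emph{uses} Lemma~\ref{BW 5.22} (through Lemma~\ref{main lemma-111}) as one of its ingredients.

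To actually establish Lemma~\ref{BW 5.22} one would need to go through the Bombieri--Iwaniec machinery summarized in the surrounding discussion: dissect the $m$-range into short intervals $I_j$ of length $N$, Taylor-expand $F$, apply Dirichlet approximation to get the rational slopes $a/r$, separate major arcs ($r\le R^2/H$, contribution $\lesssim MR\log H/\sqrt{HN}$) from minor arcs ($R\le r\le H$), dyadically pigeonhole $r\sim Q$ and $a\sim A$, carry out Poisson summation to arrive at the sum $\sum_{I_{a/r}\in\mathcal C(A,Q)}\big|\sum_{k,l} e(\vec x_{a/r}\cdot\vec y_{(k,l)})\big|$, apply the double large sieve (in its H\"older-inequality form from \cite{BourgainWatt1st}) to split into the first-spacing factor $G_q$ and the second-spacing factor $B(A,Q;V)$, optimize over the parameter $V$, and then invoke Huxley's second spacing estimates from \cite{Huxley03} together with a case analysis over $A$ and $Q$ to produce the cutoff $Q_2=R(H/R)^{39/119}(\log(2H/R))^{-3/4}$. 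None of these steps appears in your proposal, so as a proof of Lemma~\ref{BW 5.22} it has a complete gap.
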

We remind that readers may refer to  \eqref{definition of S}, \eqref{definition of N} and \eqref{definitions of parameters} for the definitions of parameters.  

\subsection{Proof of Theorem \ref{main theorem}}

We now use Lemma \ref{BW 5.22} to prove our Lemma \ref{main lemma-111} and Theorem \ref{main theorem} in this subsection.

\begin{proof} [Proof of Lemma \ref{main lemma-111}]
By Lemma \ref{BW 5.22} and the upper bound of $G_q$ in Proposition \ref{main proposition statment}, if we know that
\begin{equation}    \label{condition 1---}
\frac{R^2}{NH} \gg \Big(\frac{H}{N}\Big)^{\frac{q-2}{q-4}},
\end{equation}
then  
\begin{equation}   \label{upper bound after first spacing problem improvement}
\begin{split}
S \lesssim_\epsilon \max_{R\le Q\lesssim Q_2}  T^\epsilon   & R^{3-\frac{6}{q}} \Big(\frac{MR}{N} \Big) \Big(\frac{H}{R}\Big)^{\frac{22}{17q}} \eta^{\frac{q-4}{q(q-2)}} \Big(\frac{NH}{R^4}\Big)^{1-\frac{2}{q}}   
\\ \times & Q^{\frac{2}{q}-1}\Big(1+\eta^{\frac{2}{q(q-2)}}\Big(\frac{N}{R^2}\Big)^{\frac{1}{q}}Q^{\frac{1}{q}}\Big).
\end{split}    
\end{equation}
We keep in mind that the size of $\eta$ is independent of $Q$ (as shown in the last line of \eqref{definitions of parameters}), and so $Q$ does not appear in the first line of \eqref{upper bound after first spacing problem improvement}. The second line of \eqref{upper bound after first spacing problem improvement} can be written as 
\begin{equation}   \label{second line}
Q^{\frac{2}{q}-1}\Big(1+\eta^{\frac{2}{q(q-2)}}\Big(\frac{N}{R^2}\Big)^{\frac{1}{q}}Q^{\frac{1}{q}}\Big)=Q^{\frac{2}{q}-1}+\eta^{\frac{2}{q(q-2)}}\Big(\frac{N}{R^2}\Big)^{\frac{1}{q}}Q^{\frac{3}{q}-1}.   
\end{equation}
Since $q\ge 4$, both terms in \eqref{second line} decrease with respect to $Q$. Therefore the maximum of the right-hand side of \eqref{upper bound after first spacing problem improvement} is attained at $Q=R$. We let $Q=R$ in \eqref{upper bound after first spacing problem improvement} and obtain 
\begin{equation}    \label{plug in Q=R}
S \lesssim_\epsilon T^\epsilon  \Big(\frac{MR}{N}\Big) \Big(\frac{H}{R} \Big)^{\frac{22}{17q}} \Big(\frac{NH}{R^2}\Big)^{1-\frac{2}{q}-\frac{q-4}{q(q-2)}}\Big(1+\Big(\frac{R^2}{NH}\Big)^{\frac{2}{q-2}}\frac{N}{R}\Big)^{\frac{1}{q}}.   
\end{equation}
Next we insert the definition \eqref{definitions of parameters} of $R$  into \eqref{plug in Q=R}, and the bound becomes 
\begin{equation}   \label{22}
\begin{split}
S \lesssim_\epsilon T^\epsilon & \frac{M^{\frac{5}{2}}}{T^{\frac{1}{2}}} \Big(\frac{H^2 T}{M^3}\Big)^{\frac{11}{17q}} \Big(\frac{TH}{M^3}\Big)^{1-\frac{2}{q}-\frac{q-4}{q(q-2)}}  
\\ \times &   N^{\frac{1}{2}-\frac{57}{17q}-\frac{2(q-4)}{q(q-2)}}\Big(1+\Big(\frac{M^3}{HT}\Big)^{\frac{2}{q-2}}\frac{T^{\frac{1}{2}}}{M^{\frac{3}{2}}} N^{\frac{3}{2}-\frac{4}{q-2}}\Big)^{\frac{1}{q}}. 
\end{split}    
\end{equation}
This is exactly \eqref{upper bound middle form}. \\

In addition, when we examine the condition \eqref{condition 1---}, we notice that it is equivalent to the condition \eqref{condition 1---'} in Lemma \ref{main lemma-111}:
\begin{equation}  \label{condition of N repeated}
N^{6-q}\gg H^{2q-6} \Big(\frac{M^3}{T}\Big)^{4-q}.    
\end{equation}
Therefore, we complete the proof of Lemma \ref{main lemma-111}.
\end{proof}

\begin{proof}[Proof Theorem \ref{main theorem}]

Of course, we can insert the definition \eqref{definition of N} of $N$ into \eqref{22} and obtain estimates for $S$ in Case (A) and Case (B) respectively. However, it would be better if we have a unified version of estimates that work for both cases. With this goal in mind, we notice that the terms in the first line of \eqref{22} are independent of $N$. In the second line, the two exponents of $N$ satisfy the inequalities 
\[
\begin{split}
\frac{1}{2}-\frac{57}{17q}-\frac{2(q-4)}{q(q-2)} &<0 , 
\\ \frac{3}{2}-\frac{4}{q-2} &<0, 
\end{split}
\]
for $4\le q\le 4.5$, so the right hand side of \eqref{22} decreases with respect to $N$. If we replace $N$ by a smaller number, the upper bound would still hold.   \\

On the other hand,  by \eqref{range of q}, $6-q>0$.  If we replace $N$ by a smaller number and the condition \eqref{condition of N repeated} still holds, then it also holds for the original choice of $N$. 

In Case (A), if we insert the definition \eqref{definition of N} of $N$ in Case (A) into \eqref{condition 1---'} and \eqref{upper bound middle form}, then we obtain \eqref{condition 1-------} and \eqref{upper bound final form} after simplification.

In Case (B), we first define
\[
N_A=H (\frac{M}{H})^{\frac{41}{25}} T^{-\frac{49}{100}} (\log T)^{\frac{969}{14000}},
\]
and
\[
N_B=\min\Big\{ \frac{M^{\frac{7}{8}}(\log T)^{\frac{969}{5600}}}{T^{\frac{3}{20}}H^{\frac{29}{40}}}, \frac{M^2}{H^{\frac{1}{3}}T^{\frac{2}{3}}}  \Big\}. 
\]
Suppose that we have \eqref{condition for reduction from B to A}, and therefore $N_B>N_A$. If \eqref{condition 1-------} holds, \eqref{condition 1---'} is valid for $N=N_A$. By the above discussion, \eqref{condition 1---'} is also valid for $N_B$. Thus we have the upper bound \eqref{upper bound middle form} for $S$ with $N=N_B$. Again by the above discussion, assuming the desired range of \eqref{range of q}, \eqref{upper bound middle form} must be valid for $N=N_A$, which is \eqref{upper bound final form}. 
 We have reduced the condition \eqref{condition 1---'} and the estimate \eqref{upper bound middle form} in Case (B) to those in Case (A). Therefore, (\ref{condition 1-------}) implies (\ref{upper bound final form}) under the hypotheses in Theorem \ref{main theorem}.

\end{proof}
\begin{remark}  \label{issue}
We remind readers that the conditions \eqref{definition of Q} and \eqref{L,K,eta} cannot be taken for granted from their definitions. Fortunately, Bourgain and Watt have verified them in \cite[Section 6]{BourgainWatt2nd}. There is a subtle issue that was carefully explained in \cite[Section 6]{BourgainWatt2nd}. Nevertheless, we stress it one more time: the condition
\begin{equation}  \label{crucial condition}
R\le H,  
\end{equation}
is not always satisfied. The importance of \eqref{crucial condition} comes from the fact that it is a necessary condition to guarantee that
\[
L \ge 1, \quad \eta K \le 1, 
\]
which are assumed to be true throughout our argument. But if we contemplate on this issue, we realize $L\le 1$ implies that a certain exponential sum has only one term in it, so we do not need heavy machinery to estimate $S$ in this case. Instead, we can obtain a nice upper bound by using elementary methods from the beginning. This obstacle was first overcome by Huxley in \cite[Page 377]{Huxley1996}  (and also in \cite[(3.26), (3.27)]{Huxley03}). In this paper, our conclusion is that \eqref{plug in Q=R} still holds even if \eqref{crucial condition} fails. So the results in Theorem \ref{main theorem} are valid no matter if \eqref{crucial condition} holds or not.

Moreover, Huxley's argument will appear in Section \ref{final argument}, in order to exclude certain undesirable cases. They are instances when \eqref{crucial condition} fails, but this type of ``equivalence" is not obvious without careful computations. It turns out that, in such cases, elementary methods give us better estimates than \eqref{upper bound middle form} does.\\
\end{remark}

\subsection{The second spacing problem}
At the end of Section \ref{bounds on exponential sum}, we discuss the second spacing problem. Given a minor arc $I_{\frac{a}{r}}$, where $\frac{a}{r}$ is a reduced fraction, we first define an important parameter $m=m_{\frac{a}{r}}$ determined by $\frac{a}{r}$ in the following way.  Let $m$ be the  nearest integer to 
\[
\Big(\frac{T}{M^2}F'\Big(\frac{m}{M}\Big) \Big)^{-1} \Big(\frac{a}{r}\Big),
\]
where $-1$ in the superscript denotes the inverse function. We define the vector $\Vec{x}_{\frac{a}{r}}$ by
\begin{equation} \label{x sub a/r}
\Vec{x}_{\frac{a}{r}}:=\Big( \frac{\overline{a}}{r}, \frac{\overline{a}c}{r}, \frac{1}{\sqrt{\mu r^3}}, \frac{\kappa}{\sqrt{\mu r^3}} \Big). 
\end{equation}
Here the parameters are given precisely by,
\[
\begin{array}{rcl}
a\overline{a} & \equiv & 1(\text{ mod }r) ,  \\
\mu & = & \frac{1}{2} \frac{T}{M^3}F^{(2)}\Big( \frac{m}{M}\Big) , \\ 
\nu &= &\frac{\frac{T}{M^2}F'\Big(\frac{m}{M}\Big)-\frac{a}{r}}{2\mu}, \quad \,\,\,\, \, \qquad \text{ where } |\nu|\le 1 , \\   
c &=& \lfloor r \frac{T}{M}F\Big(\frac{m}{M}\Big) -\mu \nu^2 \rfloor,  \\ 
\kappa &=& \Big\{r \frac{T}{M}F\Big(\frac{m}{M}\Big) -\mu \nu^2  \Big\}, \quad  \text{ where } 0\le \kappa <1. 
\end{array}
\]
In this first line, we only need the information of $\overline{a}$ in congruence classes modulo $r$. In the last line above, $\{\cdot \}$ denotes the fractional part.  

The second spacing problem asks for the number of pairs $\Big(\frac{a}{r}, \frac{a_1}{r_1}\Big)$ with $a,a_1\sim A$, $r,r_1\sim Q$ such that 
\begin{align}
\Big\|\frac{\overline{a}}{r}-\frac{\overline{a_1}}{r_1} \Big\| & \lesssim \frac{1}{KL},    
\\ \Big\|\frac{\overline{a}c}{r}-\frac{\overline{a_1}c_1}{r_1} \Big\| & \lesssim \frac{1}{L},    
\\ \Big| \frac{1}{\sqrt{\mu r^3}}-\frac{1}{\sqrt{\mu_1 r_1^3}} \Big| & \lesssim \frac{1}{L\sqrt{K}},  
\\ \Big| \frac{\kappa}{\sqrt{\mu r^3}}-\frac{\kappa_1}{\sqrt{\mu_1 r_1^3}} \Big| & \lesssim \frac{\sqrt{K}}{L}. 
\end{align}
They can be further simplified in the form 
\begin{align}
\Big\|\frac{\overline{a}}{r}-\frac{\overline{a_1}}{r_1} \Big\| & \le \Delta_1,  \quad \text{where }\Delta_1 \ll 1,  \label{con1}
\\ \Big\|\frac{\overline{a}c}{r}-\frac{\overline{a_1}c_1}{r_1} \Big\| & \le \Delta_2,   \label{con2}
\\ \Big| \frac{\mu_1 r_1^3}{\mu r^3} -1 \Big| & \le \Delta_3,   \label{con3}
\\ | \kappa-\kappa_1 | & \le \Delta_4,  \label{con4}
\end{align}
but we do not specify $\Delta_1,\Delta_2,\Delta_3,\Delta_4$ here. The inequalities (\eqref{con1}-\eqref{con4}) are important in studying the pointwise estimates of $\Big|\zeta(\frac{1}{2}+it)\Big|$, the Circle and Divisor Problems. They first appear in the work of Bombieri and Iwaniec \cite{BombieriIwaniec}, where the authors only utilized two conditions \eqref{con1} and \eqref{con3}. Their novel idea is that every pair $\Big(\frac{a}{r}, \frac{a_1}{r_1}\Big)$ determines a unique matrix
\[
\mathcal{M}=\begin{pmatrix}
\alpha & \beta \\
\gamma & \delta
\end{pmatrix} \in SL_2(\mathbb Z)  \,,
\]
in the following way
\begin{equation}  \label{mtx equa}
\begin{pmatrix}
 a_1 \\ r_1    
\end{pmatrix}=\begin{pmatrix}
\alpha & \beta \\
\gamma & \delta
\end{pmatrix} \begin{pmatrix}
 a \\ r    
\end{pmatrix},   
\end{equation}
where $\alpha\delta-\beta\gamma=1$ and   $-\frac{1}{2}r r_1 < \gamma \le \frac{1}{2}rr_1$.  Moreover, (\ref{con1})
implies that $|\gamma|\leq \Delta_1rr_1$. In other words, (\ref{con1}) is used to decrease the bound of $|\gamma|$.\\

Conversely, if we know $\frac{a}{r}$ and the matrix
\begin{equation}   \label{matrix M conditions}
\mathcal{M}=\begin{pmatrix}
\alpha & \beta \\
\gamma & \delta
\end{pmatrix}, \quad \text{ where }\alpha\delta-\beta\gamma=1, \text{ and }  |\gamma|\le \Delta_1 rr_1,    
\end{equation}
then $\frac{a_1}{r_1}$ is determined by (\ref{mtx equa}). Henceforth,  to count the number of pairs, we may fix a matrix $\mathcal{M}$ first and count the number of $\frac{a}{r}$ such that  the pair derived through \eqref{mtx equa} satisfies \eqref{con1} to \eqref{con4}. 

Originally, we know that $\frac{a}{r} \sim \frac{T}{M^2}\ge 1$ lies in a relatively long interval. Using a simple method and  only the conditions \eqref{con1}, \eqref{con3}, Huxley and Watt \cite[Section 2]{HuxleyZeta1} showed that if we fix a matrix $\mathcal{M}$, then we are able to force $\frac{a}{r}$ into a much shorter interval. This restricts the number of $\frac{a}{r}$ we can choose. Summing over the number of possible $\frac{a}{r}$ first and then over all matrices $\mathcal{M}$ satisfying \eqref{matrix M conditions}, Huxley and Watt were able to recover the work of Bombieri and Iwaniec on the second spacing problem.

Later in \cite{HuxleyZeta4}, Huxley made a clever use of all four conditions \eqref{con1}-\eqref{con4}, connecting the original problem with counting the number of lattice points sitting close to a curve. Using the resonance curve method \cite{Huxley1996}, \cite{HuxleyRC}, he was able to further improve the bounds on the second spacing problem in \cite{HuxleyZeta5} and \cite{Huxley03}.

\section{ New estimate for Gauss's Circle Problem and Dirichlet's Divisor Problem }  \label{final argument}

Using the standard hyperbola method and the partial summation formula, one can show that \cite[Theorem 4.5 and 4.8]{GrahamKolesnik1991} 
\begin{equation}  \label{divisor error term}
\Delta (X)=-2\sum_{m\le \sqrt{X}} \psi\Big(\frac{X}{m}\Big)+O(1),    
\end{equation}
and 
\begin{equation}   \label{circle error term}
\begin{split}
R(X)=-4 \Big[ &\sum_{m\le \sqrt{X}}\psi\Big(\frac{X}{4m+1}\Big)-\sum_{m\le \sqrt{X}}\psi\Big(\frac{X}{4m-1}\Big)
\\ +&\sum_{m\le \sqrt{X}}\psi\Big( \frac{X}{4m}-\frac{1}{4}\Big)-\sum_{m\le \sqrt{X}}\psi\Big( \frac{X}{4m}-\frac{1}{4}\Big) \Big] +O(1),
\end{split}    
\end{equation}
where $\psi(t)=(t-\lfloor t\rfloor)-\frac{1}{2}$ is the sawtooth function. From now, we will replace the variable $X$ by $T$, in order to be consistent with our notations in Section \ref{bounds on exponential sum}. There is a well-known truncated Fourier expansion of the sawtooth function, namely, 
\begin{equation}  \label{st func expansion}
\psi(t)=\text{Im} \sum_{1\le h\le Y}  \frac{e(ht)}{\pi h}+O\Big(\frac{1}{1+\|t\|Y}\Big),     
\end{equation}
where we take $Y=MT^{-\theta^*}$. If we insert \eqref{st func expansion} into \eqref{divisor error term} and \eqref{circle error term}, and divide the range of $h,m$ into dyadic intervals $h\asymp H$, $m\asymp M$, we encounter exponential sums of the form \eqref{definition of S}: 
\begin{equation}   \label{definition of S again}
S=\sum_{h\asymp H} \sum_{m\asymp M} e\Big(\frac{hT}{M}F\Big(\frac{m}{M}\Big)\Big),    
\end{equation}
where $1\le H\le MT^{-\theta^*}$, $1\le M\le T^{\frac{1}{2}}$, and $F$ takes the following forms:
\[
F(z)=\frac{1}{z}, \, \frac{1}{z+\frac{1}{4}}, \,  \frac{1}{z-\frac{1}{4}}, \, \frac{1}{4z}-\frac{M}{4T}, \, \frac{1}{4z}+\frac{M}{4T}. 
\]
We see that all the above $F$'s are smooth on $[1,2]$ and satisfy the two conditions \eqref{condition on F 1}, \eqref{condition on F 2} stated at the beginning of Section \ref{bounds on exponential sum}. For bounds on the error term in \eqref{st func expansion} and other reductions, we refer the readers to \cite[Section 7]{BourgainWatt2nd}. In order to prove the desired bounds on the Circle and Divisor Problems in Theorem \ref{theorem in introduction}, it is enough to show that (\cite[(7.7)]{BourgainWatt2nd})
\begin{equation}  \label{goal}
\frac{S}{H}\lesssim_\epsilon T^{\theta^*+\epsilon}.  
\end{equation}
We can also put some restrictions on $H,M$:
\begin{equation}   \label{restriction on M}
M  \le T^{\frac{1}{2}},    
\end{equation}
and 
\begin{equation}  \label{restriction on H ''} 
T^{\frac{7\theta^*-2}{2}} \le H\le  MT^{-\theta^*}.
\end{equation}
Here \eqref{restriction on H ''}  is explained in \cite[(7.6)]{BourgainWatt2nd}. Before we prove \eqref{goal}, we further restrict the range of $H$, and this was mentioned at the end of Remark \ref{issue}.

With Kusmin-Landau's inequality (\cite[Thm 2.1]{GrahamKolesnik1991}) and van der Corput's inequality (\cite[Thm 2.2]{GrahamKolesnik1991}) applied to the single exponential sum over $m$ in \eqref{definition of S}, we have (\cite[(6.9)]{BourgainWatt2nd})
\begin{equation}  \label{simple case}
S \lesssim H\Big(\Big(\frac{HT}{M^2}\Big)^{-1}+ M\Big(\frac{HT}{M^3}\Big)^{\frac{1}{2}}\Big) \sim \frac{H^{\frac{3}{2}}T^{\frac{1}{2}}}{M^{\frac{1}{2}}},    
\end{equation}
where the last step is due to $M \le T^{\frac{1}{2}}\le (HT)^{\frac{3}{5}}$. It follows from \eqref{simple case} that 
\[
\frac{S}{H} \lesssim \Big(\frac{HT}{M} \Big)^{\frac{1}{2}}.
\]
If we know that 
\[
\frac{H}{M}\le T^{2\theta^*-1},
\]
then \eqref{goal} is satisfied. Therefore it remains to consider the case 
\begin{equation}  \label{another lower bound on H}
H > MT^{2\theta^*-1},   
\end{equation}
where $2\theta^*-1> -\frac{3}{8}$. If we combine \eqref{restriction on H ''} with \eqref{another lower bound on H}, then $H$ lies in the range 
\begin{equation}   \label{restriction on H}
\Big[\max(T^{\frac{7\theta^*-2}{2}},MT^{2\theta^*-1}),  MT^{-\theta^*}\Big].     
\end{equation}
\begin{remark}
We apply Kusmin-Landau's inequality when the product of the length of the interval and the second derivative is $\lesssim 1$, since this condition implies that the first derivative has a small perturbation. We apply van der Corput's inequality when the product is $\gtrsim 1$. 
\end{remark} 

In the following argument, we always assume that $T$ is sufficiently large. In other words, $T$ is larger than some absolute constant. Otherwise, \eqref{goal} becomes trivial. 

Of course, to prove \eqref{goal}, we would like to apply Theorem \ref{main theorem}. But before that, we have to make sure that the prerequisites in Theorem \ref{main theorem} are satisfied. Namely, we are in either Case (A) (Definition \ref{case A definition}) or Case (B) (Definition \ref{case B definition}). In addition, it would be better if we have \eqref{condition for reduction from B to A}. We need to verify the validity of those conditions. 

By \eqref{restriction on M}, 
\[
M\le T^{\frac{1}{2}} < T^{\frac{9}{16}}, 
\]
then by \eqref{restriction on H} and \eqref{definition of theta}, 
\[
H\le MT^{-\theta^*}< MT^{-0.3144}<MT^{-\frac{49}{164}}.
\]
The second and third conditions in \eqref{case A} always hold. We are in Case (A) if and only if the first condition in \eqref{case A}: 
\begin{equation}  \label{1st condition in case A}
H\ge M^{-9}T^4 (\log T)^{\frac{171}{140}}  \quad \text{ if } M<T^{\frac{7}{16}},     
\end{equation}
holds. According to this, we consider two mutually exclusive cases:

Case I:
\begin{equation}  \label{case I}
H\ge M^{-9}T^4 (\log T)^{\frac{171}{140}}.
\end{equation}

Case II: \eqref{case I} fails. 

Obviously, if we are in Case I, then \eqref{1st condition in case A} holds, and therefore we are in Case A. 
In the following lemma, we show that if we are in Case II, then we are in Case B, and what is more, \eqref{condition for reduction from B to A} holds. In this way, we know that the estimate \eqref{upper bound final form} in Theorem \ref{main theorem} can be applied. 
\begin{lemma}  \label{case B reduction}
Suppose that $T$ is sufficiently large. If \eqref{case I} fails, then the two conditions in Case B \eqref{case B} are satisfied. Moreover, \eqref{condition for reduction from B to A} is true. 
\end{lemma}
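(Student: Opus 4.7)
The plan is to verify directly each of the constraints in the conclusion: the two Case (B) conditions of \eqref{case B} and the two inequalities in \eqref{condition for reduction from B to A}. Two of these are immediate: $M\le C_5 T^{1/2}$ follows at once from \eqref{restriction on M} (for any $C_5\ge 1$), and the upper inequality $H<M^{-9}T^4(\log T)^{171/140}$ in \eqref{condition for reduction from B to A} is precisely the hypothesis that \eqref{case I} fails. So the substantive work is to verify
\[
(b)\ \ H\le M^{35/69}T^{-2/23},\quad (c)\ \ H\le B_0 M^{3/2}T^{-1/2},\quad (d)\ \ M^{-27/23}T^{53/92}<H.
\]

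The key preparation is to extract two-sided control on $M$ by powers of $T$ from the information at hand. Combining $H\ge T^{(7\theta^*-2)/2}$ from \eqref{restriction on H} with $H\le MT^{-\theta^*}$ gives the lower bound $M\ge T^{(9\theta^*-2)/2}$, with $(9\theta^*-2)/2\approx 0.4152$. On the other hand, combining $H\ge MT^{2\theta^*-1}$ with the failure of \eqref{case I}, $H<M^{-9}T^4(\log T)^{171/140}$, yields $M^{10}<T^{5-2\theta^*}(\log T)^{171/140}$, i.e.\ $M<T^{(5-2\theta^*)/10}(\log T)^{171/1400}$, where $(5-2\theta^*)/10\approx 0.4371$. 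For $T$ sufficiently large, the logarithmic factor is negligible against any positive power of $T$.

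With these bounds in hand, each of (b), (c), (d) reduces to a numerical inequality in $\theta^*$. For (b), $H\le MT^{-\theta^*}$ reduces the claim to $M\le T^{(\theta^*-2/23)\cdot 69/34}$; the right-hand exponent is $\approx 0.4617>0.4371$, so the upper bound on $M$ above suffices. For (c), the same substitution reduces it to $M\ge T^{1-2\theta^*}/B_0^2$; since $1-2\theta^*\approx 0.3710<0.4152$, the lower bound on $M$ suffices (even with $B_0=1$). For (d), substituting $H\ge T^{(7\theta^*-2)/2}$ and $M\ge T^{(9\theta^*-2)/2}$ gives
\[
H\,M^{27/23}\ge T^{(808\theta^*-200)/92},
\]
and this exceeds $T^{53/92}$ precisely when $\theta^*>253/808\approx 0.31312$.

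The main obstacle is the tightness of (d): $\theta^*\approx 0.31448$ only narrowly beats $253/808\approx 0.31312$, so (d) genuinely uses the precise value $\theta^*=0.3144831759\dots$ from \eqref{definition of theta} and would fail for somewhat larger admissible exponents. The other inequalities have comfortable slack. The role of the hypothesis ``$T$ sufficiently large'' is simply to absorb the $(\log T)^{171/1400}$ factor in the upper bound on $M$ and to allow $B_0$ to be chosen independently of any logarithmic correction.
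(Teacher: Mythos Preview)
Your proof is correct and follows essentially the same strategy as the paper's: verify each of the four required inequalities directly from the standing hypotheses on $H$, $M$, $T$ together with the numerical value of $\theta^*$. The only organizational difference is in (b): the paper bounds $H$ directly via a weighted geometric mean of the two upper bounds $H\le MT^{-\theta^*}$ and $H<M^{-9}T^4(\log T)^{171/140}$ (with weight $328/345$ on the first), whereas you take a detour through an upper bound on $M$ obtained from the lower bound $H\ge MT^{2\theta^*-1}$. Both routes give the required slack for $T$ large. For (c) and (d) your computations are exactly parallel to the paper's, and in particular the critical threshold $\theta^*>253/808$ for (d) matches the paper's condition $202\theta^*-50>53/4$. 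One small expository slip: your final remark that (d) ``would fail for somewhat larger admissible exponents'' is backwards---larger $\theta^*$ makes (d) easier; it is \emph{smaller} values of $\theta^*$ (below $253/808\approx 0.31312$) that would break the argument.
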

\begin{proof}
The first condition in \eqref{case B} holds if we choose $C_5=3$. To prove the second inequality in \eqref{case B}, we show that
\begin{equation}  \label{case B 1st condition}
H\le M^{\frac{35}{69}}T^{-\frac{2}{23}}    
\end{equation}
and 
\begin{equation}   \label{case B 2nd condition}
H\le B_0 M^{\frac{3}{2}}T^{-\frac{1}{2}}    
\end{equation}
are valid, where $B_0$ is some absolute constant.  

On one hand, by the upper bound of $H$ in \eqref{restriction on H} and the negation of \eqref{case I}, we have 
\begin{equation}  \label{H 11}
\begin{split}
H\le & \Big(\frac{M}{T^{-\theta^*}}\Big)^{\frac{328}{345}} \Big(\frac{T^4(\log T)^{\frac{171}{140}}}{M^9}\Big)^{1-\frac{328}{345}}
\\ =& M^{\frac{35}{69}} T^{\frac{68-328\theta^*}{345}}(\log T)^{\frac{969}{16100}}. 
\end{split}    
\end{equation}
By the definition \eqref{definition of theta} of $\theta^*$, we know that $\theta^* \ge 0.3144$, so 
\begin{equation}  \label{theta computation 1}
\frac{68-328\theta^*}{345}<-0.1018 <-\frac{2}{23}.    
\end{equation}
As long as $T$ is large enough, a negative power of $T$ would dominate $(\log T)^{\frac{969}{16100}}$. It follows from \eqref{H 11} and \eqref{theta computation 1} that \eqref{case B 1st condition} holds. \\

On the other hand, the upper bound on $H$ in \eqref{restriction on H} implies that
\[
M \ge HT^{\theta^*},
\]
and by the first lower bound of $H$ in \eqref{restriction on H},
\[
\Big(\frac{M^3}{T}\Big)^{\frac{1}{2}} \ge  \Big(\frac{H^3 T^{3\theta^*}}{T}\Big)^{\frac{1}{2}}    
=H H^{\frac{1}{2}} T^{\frac{3\theta^*-1}{2}} 
 \ge H T^{\frac{13\theta^*-4}{4}}
 \ge  H T^{\frac{3}{400}+\frac{13}{4000}},
\]
where we use $\theta^* \ge 0.311$ in the last line. If $T$ is large in terms of $B_0$,  \eqref{case B 2nd condition} holds. 

\quad 

Next we show that \eqref{condition for reduction from B to A} is satisfied if \eqref{case I} fails. The upper bound in \eqref{condition for reduction from B to A} follows from the negation of \eqref{case I}. We turn to show the lower bound. The upper bound of $H$ in \eqref{restriction on H} indicates that 
\[
M\ge HT^{\theta^*}.
\]
Then it is easily seen that 
\[
H^{23}M^{27}\ge H^{50}T^{27\theta^*},
\]
and we invoke the first lower bound of $H$ in \eqref{restriction on H} to achieve 
\begin{equation}  \label{33}
H^{23}M^{27}\ge T^{25(7\theta^*-2)+27\theta^*}= T^{202\theta^*-50}>T^{\frac{53}{4}},    
\end{equation}
where we use $\theta^*>0.3144$ in the last inequality. Now we can conclude from \eqref{33} that 
\[
H > M^{-\frac{27}{23}}T^{\frac{53}{92}},
\]
which is the lower bound in \eqref{condition for reduction from B to A}. Thus, Lemma \ref{case B reduction} has been proved. 
\end{proof}

If we integrate the above discussions of Case I and Case II (i.e. Lemma \ref{case B reduction}) with Theorem \ref{main theorem}, we arrive at the following proposition,
\begin{proposition}   \label{last proposition used}
If  \eqref{range of q} and \eqref{condition 1-------} are valid, then we can use \eqref{upper bound final form} to estimate the double exponential sum $S$ \eqref{definition of S again} in the Circle and Divisor Problems.    \\
\end{proposition}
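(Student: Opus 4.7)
The plan is to assemble the pieces already established in the preceding discussion and to argue a simple dichotomy on the size of $H$ relative to $M$ and $T$. Since the double sum $S$ in \eqref{definition of S again} arises from the sawtooth expansion and satisfies the smoothness and non-degeneracy hypotheses \eqref{condition on F 1}--\eqref{condition on F 2} for each of the explicit choices of $F$, the only remaining question is whether the triple $(H,M,T)$ satisfies the hypotheses of Theorem \ref{main theorem}, so that \eqref{upper bound final form} is at our disposal. The natural bifurcation is exactly the one built into Theorem \ref{main theorem}: either we are in Case (A) directly, or we are in Case (B) together with the reduction condition \eqref{condition for reduction from B to A}.

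First I would record the standing size constraints. By \eqref{restriction on M} we have $M\le T^{1/2}$, and by \eqref{restriction on H} the parameter $H$ lies in $[\max(T^{(7\theta^*-2)/2},MT^{2\theta^*-1}),MT^{-\theta^*}]$. Using $\theta^* > 0.3144$, one checks $MT^{-\theta^*} \le MT^{-49/164}$, and $M \le T^{1/2} < T^{9/16}$, so the second and third inequalities in \eqref{case A} are automatically satisfied. Thus membership in Case (A) reduces to whether the first inequality in \eqref{case A}, namely \eqref{1st condition in case A}, holds. This naturally splits the argument into Case I ($H \ge M^{-9}T^4(\log T)^{171/140}$) and Case II (the negation).

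In Case I we are in Case (A) of Theorem \ref{main theorem} by definition, and the conclusion \eqref{upper bound final form} is immediate provided \eqref{range of q} and \eqref{condition 1-------} are assumed. In Case II, I invoke Lemma \ref{case B reduction}, which has already been established: the hypothesis $M\le T^{1/2}$ gives the first line of \eqref{case B} with $C_5=3$, while the two bounds \eqref{case B 1st condition} and \eqref{case B 2nd condition} follow by combining the negation of \eqref{case I} with the range of $H$ in \eqref{restriction on H} and the numerical value of $\theta^*$. That same lemma provides \eqref{condition for reduction from B to A}, which is precisely the extra hypothesis the Case (B) branch of Theorem \ref{main theorem} requires to conclude \eqref{upper bound final form} under \eqref{range of q} and \eqref{condition 1-------}. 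In both cases the estimate \eqref{upper bound final form} is thus available, completing the proof.

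The main obstacle in this proposition is essentially bookkeeping rather than new mathematics: one must verify that the Case I / Case II dichotomy genuinely exhausts the admissible range of $(H,M,T)$ coming from the Circle and Divisor Problems, and that no edge of the parameter box is missed. The only place where something mildly non-trivial is happening is inside Lemma \ref{case B reduction}, where the inequalities $\theta^* > 0.3144$ and the assumption that $T$ is sufficiently large are used to absorb the logarithmic factor $(\log T)^{969/16100}$ into a small negative power of $T$; once that numerical reduction is granted, the proposition follows by direct invocation of Theorem \ref{main theorem}.
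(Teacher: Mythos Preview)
Your proposal is correct and follows exactly the paper's own route: the proposition is obtained by combining the Case~I/Case~II dichotomy on \eqref{case I} with Lemma~\ref{case B reduction} and then invoking Theorem~\ref{main theorem}, after first noting (using $M\le T^{1/2}$ and $H\le MT^{-\theta^*}$) that the second and third conditions in \eqref{case A} hold automatically. The paper's proof is in fact just the single sentence ``integrate the above discussions of Case~I and Case~II (i.e.\ Lemma~\ref{case B reduction}) with Theorem~\ref{main theorem}'', and you have unpacked precisely that.
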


We now turn to the proof of \eqref{goal}. As we will see, our optimal choice of $q$ depends on the relations between $H,M$ and $T$, and there is a technical issue when $H$ is very small, namely, $H< MT^{-\frac{3}{8}}$. The reason is that $q<4$ in this case. However, we do not have to worry about this case, since we have $H\ge MT^{2\theta^*-1}>MT^{-\frac{3}{8}}$ in \eqref{restriction on H}.  

Now we let $x\in \mathbb R$ satisfy  
\begin{equation}  \label{definition of x}
H=MT^{-x}.    
\end{equation}
By \eqref{restriction on H}, we know that
\begin{equation}  \label{range of x}
-\frac{3}{8}<x\le -\theta^* \le -\frac{49}{164}.   
\end{equation} 
\begin{definition}[Choice of $q$]
We let
\begin{equation}  \label{choice of q}
q=q_x=\frac{2}{5\sqrt{\frac{-1-8x}{2(1-14x)}}-1}+2.   
\end{equation}   
\end{definition}
It is easy to verify that \eqref{range of q} holds. In fact, $q$ is an increasing function of $x$ for $x$ in the range \eqref{range of x}, and 
\[
q_{-\frac{3}{8}}=4, \quad  q_{-\theta^*}\approx 4.29<4.35.
\]
By Proposition \ref{last proposition used}, if \eqref{condition 1-------} is satisfied, then we have 
\begin{equation}  \label{upper bound final form repeated}
\begin{split}
\frac{S}{H} \lesssim_\epsilon T^\epsilon & \Big(\frac{H}{M} \Big)^{-\frac{8}{25}+\frac{36}{25q}+\frac{7(q-4)}{25q(q-2)}} T^{\frac{51}{200}+\frac{29}{100q}-\frac{q-4}{50q(q-2)}}
\\ \times & \Big(1+\Big(\frac{H}{M} \Big)^{\frac{14}{25(q-2)}-\frac{24}{25}} T^{-\frac{1}{25(q-2)}-\frac{47}{200}} \Big)^{\frac{1}{q}}.
\end{split}      
\end{equation}
Inserting the definition of $x$ \eqref{definition of x} into \eqref{upper bound final form repeated}, we deduce that
\begin{equation}  \label{44}
\begin{split}
\frac{S}{H} \lesssim_\epsilon &  T^{-\frac{8}{25}x+\frac{51}{200}+\frac{36x}{25q}+\frac{29}{100q}+\frac{14x-1}{50}\cdot \frac{q-4}{q(q-2)}+\epsilon}   
\\ \times & \Big( 1+T^{(\frac{7x}{25}-\frac{1}{50})\frac{2}{q-2}-\frac{24x}{25}-\frac{47}{200}} \Big)^{\frac{1}{q}}.
\end{split}    
\end{equation}
Also, \eqref{condition 1-------} can be deduced from the inequality
\begin{equation}  \label{check 1}
\frac{\frac{7}{25}x-\frac{1}{50}}{\frac{41}{25}x+\frac{49}{100}}< \frac{q-2}{q-4}=\frac{1}{2-5\sqrt{\frac{-1-8x}{2(1-14x)}}} .  \end{equation}
Again, here we use the fact that a positive power of $T$ is greater than a positive power of $\log T$ if $T$ is sufficiently large. We remind the readers that both the numerator and denominator of  the fraction on the far left-hand side of \eqref{check 1} are negative.

To verify the condition \eqref{condition 1-------} and one more inequality \eqref{check 2} which helps simplify further computations, we need the following lemma, whose proof is postponed to the end of this section:
\begin{lemma}  \label{checking}
For $x$ in the range \eqref{range of x}, \eqref{check 1} holds  and 
\begin{equation}   \label{check 2}
\frac{\frac{24}{25}x+\frac{47}{200}}{\frac{7}{25}x-\frac{1}{50}}\le \frac{2}{q-2}= 5\sqrt{\frac{-1-8x}{2(1-14x)}}-1 .
\end{equation} 
\end{lemma}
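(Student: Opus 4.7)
The plan is to reduce each of the two inequalities to an explicit quadratic polynomial inequality in $x$ by clearing denominators (with careful sign analysis) and squaring, and then verify those quadratics on the interval $(-\tfrac{3}{8},-\theta^{*}]$ using the quadratic formula.

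First I would handle sign analysis so that every cross-multiplication and squaring preserves the inequality direction. On the range \eqref{range of x}, each of the three linear expressions $\tfrac{1}{50}-\tfrac{7}{25}x$, $-\tfrac{41}{25}x-\tfrac{49}{100}$, and $-\tfrac{24}{25}x-\tfrac{47}{200}$ is positive (easily checked at the two endpoints). In particular the numerators and denominators of the left-hand sides of \eqref{check 1} and \eqref{check 2} are each pairs of negatives, so both left-hand sides are positive. Writing $s:=5\sqrt{(-1-8x)/(2(1-14x))}$, the inequality $s<2$ after squaring reduces to $x>-\tfrac{3}{8}$, so $s<2$ strictly on the range; together with $s>1$ (again an endpoint check), this makes each right-hand side positive.

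For \eqref{check 1}, clearing the positive denominators and isolating $s$ puts the inequality in the equivalent form $s>(53+108x)/(2(1-14x))$, both sides positive. Squaring and using $s^{2}=25(-1-8x)/(2(1-14x))$ reduces the inequality to $(53+108x)^{2}<50(-1-8x)(1-14x)$, which after expansion becomes
\[ 6064\,x^{2}+11148\,x+2859<0. \]
The quadratic formula gives roots approximately $-1.5303$ and $-0.3081$. Since $-\tfrac{3}{8}=-0.375$ exceeds the smaller root and $-\theta^{*}\approx-0.31448$ is strictly less than the larger root $-0.3081$, the interval $(-\tfrac{3}{8},-\theta^{*}]$ lies strictly inside the negativity region, proving \eqref{check 1}.

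For \eqref{check 2} the analogous manipulation yields $(-43-248x)/(4(1-14x))\le s$ with both sides positive, and squaring gives
\[ 39104\,x^{2}+20128\,x+2049\le 0. \]
A direct evaluation shows that $x=-\tfrac{3}{8}$ is an exact root: $39104\cdot\tfrac{9}{64}-20128\cdot\tfrac{3}{8}+2049=5499-7548+2049=0$. By Vieta's formulas the other root is $-\tfrac{8\cdot 2049}{3\cdot 39104}\approx-0.1397$. The quadratic is therefore non-positive on $[-\tfrac{3}{8},-0.1397]$, which contains our interval $(-\tfrac{3}{8},-\theta^{*}]$, proving \eqref{check 2}.

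The only subtle point is that the margin in the first inequality at $x=-\theta^{*}$ is modest: plugging in one finds $6064\,\theta^{*2}-11148\,\theta^{*}+2859\approx-47$, so the strict inequality holds but not by a wide ratio. To certify this rigorously I would use enough digits of $\theta^{*}$ from its defining equation \eqref{definition of theta} to place $-\theta^{*}$ below the larger root of the first quadratic without ambiguity. The remainder of the argument is purely algebraic once the signs are pinned down.
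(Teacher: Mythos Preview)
Your proof is correct. For \eqref{check 2} your argument coincides with the paper's: both square to obtain the quadratic $39104x^{2}+20128x+2049\le 0$; the paper then factors this as $(8x+3)(4888x+683)\le 0$, while you locate the same two roots via Vieta's formulas.

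For \eqref{check 1} the routes diverge slightly. You clear denominators and square to obtain the quadratic $6064x^{2}+11148x+2859<0$ and then bracket the interval by the numerical roots. The paper instead rewrites \eqref{check 1} as $-17(8x+3)/(164x+49)<2/(q-4)$, observes that the left side is increasing and the right side decreasing in $x$, and so reduces everything to a single calculator check at $x=-\theta^{*}$. Your approach trades the monotonicity observation for more algebra; the paper's trades the quadratic analysis for a qualitative argument. Both ultimately rest on a numerical verification involving $\theta^{*}$ (your check that $6064\theta^{*2}-11148\theta^{*}+2859\approx -47$ plays the same role as the paper's endpoint evaluation), so neither is more rigorous than the other---they are simply two valid ways of handling the same inequality.
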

This lemma has a direct corollary. 
\begin{corollary}   \label{final corollary}
For $-\frac{3}{8}\le x\le -\theta^*$,   
\[
\frac{S}{H} \lesssim_\epsilon  T^{-\frac{8}{25}x+\frac{51}{200}+\frac{36x}{25q}+\frac{29}{100q}+\frac{14x-1}{50}\cdot\frac{q-4}{q(q-2)}+\epsilon} . 
\]
\end{corollary}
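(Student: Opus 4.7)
The plan is to derive the corollary as an almost immediate consequence of the estimate (\ref{44}) together with Lemma \ref{checking}: everything boils down to showing that the ``extra factor'' $\bigl(1+T^{(\frac{7x}{25}-\frac{1}{50})\frac{2}{q-2}-\frac{24x}{25}-\frac{47}{200}}\bigr)^{1/q}$ appearing in (\ref{44}) is $O(1)$, uniformly in the admissible range of $x$.

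First, I would verify the hypotheses of Proposition \ref{last proposition used}. For $x \in [-\tfrac{3}{8},-\theta^*]$, the choice $q = q_x$ prescribed by (\ref{choice of q}) is continuous and monotone in $x$, with $q_{-3/8}=4$ and $q_{-\theta^*}\approx 4.29 < 4.5$, so (\ref{range of q}) is satisfied. The condition (\ref{condition 1-------}) (up to a harmless $\log T$ factor absorbed in $T^\epsilon$) reduces to the inequality
\[
\frac{\tfrac{7}{25}x-\tfrac{1}{50}}{\tfrac{41}{25}x+\tfrac{49}{100}} < \frac{q-2}{q-4},
\]
which is precisely (\ref{check 1}) in Lemma \ref{checking}. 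Hence the bound (\ref{upper bound final form repeated}) is valid, and substituting $H=MT^{-x}$ gives (\ref{44}).

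Next, the core step is to control the second factor in (\ref{44}). Set
\[
B(x,q) := \Bigl(\tfrac{7x}{25}-\tfrac{1}{50}\Bigr)\frac{2}{q-2} - \Bigl(\tfrac{24x}{25}+\tfrac{47}{200}\Bigr).
\]
Since $x \le -\theta^* < 0$, the coefficient $\tfrac{7x}{25}-\tfrac{1}{50}$ is \emph{negative}. Dividing the inequality (\ref{check 2}) from Lemma \ref{checking} by this negative quantity flips the inequality, yielding exactly
\[
\Bigl(\tfrac{7x}{25}-\tfrac{1}{50}\Bigr)\frac{2}{q-2} \le \tfrac{24x}{25}+\tfrac{47}{200},
\]
i.e.\ $B(x,q) \le 0$. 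Consequently $T^{B(x,q)} \le 1$ and
\[
\Bigl(1+T^{B(x,q)}\Bigr)^{1/q} \le 2^{1/q} = O(1).
\]
Inserting this into (\ref{44}) yields the desired estimate in Corollary \ref{final corollary}.

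The only genuinely substantive content is Lemma \ref{checking}; once that is granted, the corollary is a one-line sign-chase together with substitution of the definition of $q_x$. Thus the main obstacle is not in proving the corollary itself, but in establishing Lemma \ref{checking}, where one has to check two algebraic inequalities in $x$ uniformly across the interval $[-\tfrac{3}{8},-\theta^*]$; this will presumably be done by manipulating the square-root expressions in (\ref{choice of q}) and reducing to polynomial inequalities, exploiting that $-\theta^*$ is defined precisely as the relevant root in (\ref{definition of theta}).
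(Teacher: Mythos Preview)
Your proposal is correct and follows essentially the same approach as the paper's own proof: both verify (\ref{condition 1-------}) via (\ref{check 1}) of Lemma \ref{checking}, then use (\ref{check 2}) together with the negativity of $\tfrac{7x}{25}-\tfrac{1}{50}$ to conclude that the extra factor in (\ref{44}) is $O(1)$. Your write-up is in fact slightly more explicit about the sign-chase and the verification of (\ref{range of q}) than the paper's.
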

\begin{proof}
By Lemma \ref{checking}, the condition \eqref{check 1} is checked, and therefore the condition \eqref{condition 1-------} is true. The inequality \eqref{check 2} implies that
\[
T^{(\frac{7x}{25}-\frac{1}{50})\frac{2}{q-2}-\frac{24x}{25}-\frac{47}{200}} \lesssim 1,
\]
for $T$ large (note that $\frac{7x}{25}-\frac{1}{50}<0$). So the inequality in Corollary \ref{final corollary} follows from Proposition \ref{last proposition used} and \eqref{44}. 
\end{proof}
Now we can embark on the proof of \eqref{goal}.
\begin{proof}  [Proof of \eqref{goal}]
By the definition \eqref{choice of q} of $q$ and from simple algebra,  
\[
\begin{split}
\frac{q-4}{q-2}=1-\frac{2}{q-2}=2-5\sqrt{\frac{-1-8x}{2(1-14x)}}\,.  
\end{split}
\]
Since $-1-8x>0$ and $1-14x>0$, the exponent of $T$ involving $q$ in Corollary \ref{final corollary} can be written as 
\begin{equation}  \label{algebra 1}
\begin{split}
&\frac{36x}{25q}+\frac{29}{100q}+\frac{14x-1}{50}\cdot\frac{1}{q}\cdot \frac{q-4}{q-2} 
\\ =& \Big[\frac{36x}{25}+\frac{29}{100}+\frac{14x-1}{50}\Big(2-5\sqrt{\frac{-1-8x}{2(1-14x)}}\Big)\Big]\frac{1}{q}
\\ =& \Big[ \frac{36x}{25}+\frac{29}{100}+\frac{14x-1}{25} +\frac{1-14x}{10}\sqrt{\frac{-1-8x}{2(1-14x)}}\Big]\frac{1}{q}
\\ =& \frac{1}{q}\Big(\frac{8x+1}{4}+\frac{1}{10}\sqrt{\frac{(1-14x)(-1-8x)}{2}}\Big)
\\ =& \frac{1}{q} \cdot \frac{\sqrt{-1-8x}}{20} \Big( \sqrt{2(1-14x)} -5\sqrt{-1-8x}\Big). 
\end{split}    
\end{equation}
By the definition \eqref{choice of q} of $q$, we also have
\begin{equation}  \label{algebra 2}
\frac{q}{2}= \frac{5\sqrt{-1-8x}}{5\sqrt{-1-8x}-\sqrt{2(1-14x)}}.    
\end{equation}
Substituting \eqref{algebra 2} into \eqref{algebra 1}, we obtain
\begin{equation}  \label{algebra 3}
\begin{split}
&\frac{36x}{25q}+\frac{29}{100q}+\frac{14x-1}{50}\cdot\frac{1}{q}\cdot \frac{q-4}{q-2}
\\ =& -\frac{1}{200} (\sqrt{2(1-14x)}-5\sqrt{-1-14x})^2.
\end{split}   
\end{equation}
The equality \eqref{algebra 3}, together with Corollary \ref{final corollary}, leads to 
\begin{equation}  \label{upper bound useful 3}
\frac{S}{H}\lesssim_\epsilon T^{-\frac{8}{25}x-\frac{1}{200}\Big(\sqrt{2(1-14x)}-5\sqrt{-1-8x}\Big)^2+\frac{51}{200}+\epsilon}.
\end{equation}
It can be quickly checked (and also observed from the graph in Definition \ref{theta}) that the function
\[
-\frac{8}{25}x-\frac{1}{200}\Big(\sqrt{2(1-14x)}-5\sqrt{-1-8x}\Big)^2+\frac{51}{200}
\]
is increasing with respect to $x$ on $[-\frac{3}{8},-\theta^*]$, and is equal to $\theta^*$ at $x=-\theta^*$ by the definition \eqref{definition of theta} of $\theta$. This finishes the proof of \eqref{goal}. 

\end{proof}

Lastly, we give a proof for Lemma \ref{checking}.
\begin{proof} [Proof of Lemma \ref{checking}] Throughout this proof, $-\frac{3}{8}\le x\le -\theta^*$. The inequality
\eqref{check 1} can be written as
\[
\frac{\frac{7}{25}x-\frac{1}{50}}{\frac{41}{25}x+\frac{49}{100}} <1+\frac{2}{q-4},
\]
which is equivalent to 
\begin{equation}  \label{55}
\frac{\frac{7}{25}x-\frac{1}{50}}{\frac{41}{25}x+\frac{49}{100}}-1< \frac{2}{q-4}.    
\end{equation}
After simplification, \eqref{55} becomes 
\begin{equation}   \label{66}
-17\frac{8x+3}{164x+49}<\frac{2}{q-4}.    
\end{equation}
The left-hand side of \eqref{66} is increasing with respect to $x$. $q=q_x\ge 4$ is also an increasing function of $x$, so the right-hand side of \eqref{66} is decreasing with respect to $x$. To prove \eqref{66}, we only need to verify it at $x=-\theta^*$, where $\theta^*$ was defined in \eqref{definition of theta}. It is easy to do so using a calculator. Thus we see that \eqref{check 1} holds.

\quad

To prove \eqref{check 2}, we notice that this inequality can be rewritten as 
\begin{equation}  \label{77}
\frac{248x+43}{56x-4}=\frac{\frac{24}{25}x+\frac{47}{200}}{\frac{7}{25}x-\frac{1}{50}}+1 \le 5\sqrt{\frac{-1-8x}{2(1-14x)}}.    
\end{equation}
Since both sides of \eqref{77} are positive, we can square it. Therefore \eqref{77} is equivalent to 
\begin{equation}  \label{88}
\Big(\frac{248x+43}{56x-4}\Big)^2 \le 25 \frac{-1-8x}{2(1-14x)}. \end{equation}
We clear denominators on both sides of \eqref{88} and divide both sides by $2$. Thus, 
\begin{equation}  \label{9}
(248x+43)^2 (1-14x)\le 200 (-1-8x)(1-14x)^2.    
\end{equation}
Since $1-14x>0$ for $x\le -\theta^*$, we cancel the common factor $1-14x$ on both sides of \eqref{9} to derive an alternative inequality
\[
(248x+43)^2 \le 200 (-1-8x)(1-14x), 
\]
which, after some simplification, is equivalent to 
\[
(8x+3)(4888x+683)\le 0.
\]
It is easily seen that the last inequality holds for $-\frac{3}{8}\le x\le -\theta^*$. 
Hence (\ref{check 2}) holds. We thus complete the proof of Lemma \ref{checking}.
\end{proof}

\quad

\end{document}